\documentclass[10pt,a4paper]{article} %{scrartcl}

%%%%%%%%%%% Packages %%%%%%%%%%%%%
\usepackage[english]{babel}
\usepackage{abstract}
\usepackage{amsmath}
\usepackage{amsfonts}
\usepackage{amssymb}
\usepackage{suffix}
\usepackage{mathtools}
\usepackage{framed}
\usepackage[svgnames]{xcolor}
\usepackage[amsmath, amsthm, thmmarks, framed]{ntheorem}
\usepackage{comment}
\usepackage{tikz}
\usetikzlibrary{calc, decorations.markings}
\usepackage[textsize=footnotesize]{todonotes}
\usepackage{hyperref}

%%%%%%%%%%% Environments %%%%%%%%%%%

\definecolor{lightgray}{gray}{1}

%\theoremframepreskip{0.1cm}
%\theoremframepostskip{0.2cm}

\newtheorem{theorem}{Theorem}[section]
\newtheorem{corollary}[theorem]{Corollary}
\newtheorem{proposition}[theorem]{Proposition}
\newtheorem{lemma}[theorem]{Lemma}
\theoremstyle{remark}
\newtheorem{remark}[theorem]{Remark}

\renewenvironment{proof}[1][\textit{Proof.}]{\begin{trivlist}
\item[\hskip \labelsep {\bfseries #1}]}{\end{trivlist}}

%\theoremstyle{definition}
%\newshadedtheorem{definitie}[stelling]{Definitie}

%%%%%%%%%%% Footnote %%%%%%%%%%%

\makeatletter
\def\blfootnote{\xdef\@thefnmark{}\@footnotetext}
\makeatother

%%%%%%%%%%% Commands %%%%%%%%%%%%%

%% MEIJER G
\DeclarePairedDelimiterX\MeijerM[3]{\lparen}{\rparen}{\begin{matrix}#1 \\ #2\end{matrix}\delimsize\vert\,#3}

\newcommand\MeijerG[7]{ G^{\,#1,#2}_{#3,#4}\MeijerM*{#5}{#6}{#7}}

\newcommand{\ds}{\displaystyle}
\DeclareMathOperator{\Tr}{Tr}
\DeclareMathOperator{\Pf}{Pf}

\DeclareMathOperator{\Real}{Re}
\DeclareMathOperator{\Imag}{Im}
\renewcommand{\Re}{\Real}
\renewcommand{\Im}{\Imag}

\DeclareMathOperator{\diag}{diag}

\newcommand{\U}{\mathcal U}

\newcommand{\eins}{\leavevmode\hbox{\small1\kern-3.8pt\normalsize1}}

\numberwithin{equation}{section}

%%%%%%%%%%% HEADER %%%%%%%%%%%%%

%\title{Singular values of random matrix products of truncated unitary matrices}
\title{Singular value statistics of matrix products with truncated unitary matrices}
\author{Mario Kieburg, Arno B. J. Kuijlaars and Dries Stivigny}
\date{\today}

%%%%%%%%%%%%%%%%%%%%%%%%%%%%%%%%
%%%%%%%%%%% DOCUMENT %%%%%%%%%%%%%
%%%%%%%%%%%%%%%%%%%%%%%%%%%%%%%%

\begin{document}

\maketitle
\blfootnote{MK: Fakult\"at  f\"ur Physik, Universit\"at Bielefeld, Postfach 100131, D-33501 Bielefeld,
Germany, e-mail: mkieburg@physik.uni-bielefeld.de \\ 
AK and DS: KU Leuven, Department of Mathematics, Celestijnenlaan 200B box 2400, 3001 Leuven, Belgium, 
e-mails: arno.kuijlaars@wis.kuleuven.be  and dries.stivigny@wis.kuleuven.be}

%%%%%%%%%%% ABSTRACT %%%%%%%%%%%%%
%%%%%%%%%%%%%%%%%%%%%%%%%%%%%%%

\begin{abstract} 
We prove that the squared singular values of a fixed matrix multiplied with a truncation of
a Haar distributed unitary matrix are distributed by a polynomial ensemble.
This result is applied to a multiplication of a truncated unitary matrix with a random matrix.
We show that the structure of polynomial ensembles and of certain Pfaffian ensembles is preserved. Furthermore we derive the joint singular value density of a product of truncated unitary matrices and its corresponding correlation kernel which can be written as a double contour integral. This leads to hard edge scaling limits that also include new finite rank perturbations of the Meijer G-kernels found for products of complex Ginibre random matrices.
\end{abstract}

%%%%%%%%% INTRODUCTION %%%%%%%%%%%
%%%%%%%%%%%%%%%%%%%%%%%%%%%%%%%

\section{Introduction}

The study of  random matrices benefits greatly from explicit
formulas of joint eigenvalue densities that are known for large classes
of random matrix ensembles. Quite a lot of these densities have the structure of a
determinantal or Pfaffian point process. Such structures are incredibly helpful to extract the spectral statistics in the limit of large matrix dimension of such ensembles. Various techniques as free probability \cite{gb2}, orthogonal polynomials \cite{Bor,Methabook,Dei,Kon}, and supersymmetry \cite{gb1} have been developed to derive these limits.

In an important recent development it was found that explicit formulas
also exist for the eigenvalue and singular value statistics of products of random matrices.
This was first established for the eigenvalues \cite{AkBu,Ips,IpKi,For14} and singular values \cite{AkIpKi, AkKiWe}
of products of Ginibre matrices. Shortly after this development, results were also derived for the eigenvalues
of products of truncated unitary matrices \cite{AdReReSa,AkBuKiNa,IpKi}. A common feature is that the joint probability
densities are expressed  in terms of Meijer G-functions which were also found in other recent works on random matrices \cite{BeBo, For, ForKi, FoLiZi, Str}.

Given the results of \cite{AdReReSa,AkBuKiNa,IpKi} on eigenvalues, it seems to be 
natural to expect that also the singular
values of products of truncated unitary matrices have an explicit joint probability density. 
We aim at proving this statement. The squared
singular values $x_1, \ldots, x_n$ of such a product have the joint probability density
\begin{equation} \label{polensemble} 
	\frac{1}{Z_n} \prod_{j<k} (x_k-x_j) \det \left[ w_k(x_j) \right]_{j,k=1}^n, \qquad \text{ all } x_j > 0, 
\end{equation}
for certain  functions $w_k$, see Corollary \ref{corTruncations} below.

A joint probability density function of the form \eqref{polensemble} is called a polynomial ensemble in \cite{KuSt}.
It is an example of a biorthogonal ensemble \cite{Bor} whose correlation kernel is built
out of polynomials and certain dual functions. It reduces to an orthogonal polynomial
ensemble \cite{Kon} in the case $w_k(x) = x^{k-1} w(x)$ for a certain weight function $w$. 
The results of \cite{AkIpKi, AkKiWe} for the singular values of products of complex Ginibre
matrices were interpreted in \cite{KuSt} in the sense of a transformation of polynomial
ensembles.  
Recall that a complex Ginibre matrix is a random matrix whose entries are independent standard
complex Gaussians. The main result of \cite{KuSt} is the following.
\begin{theorem}[Theorem 2.1 in \cite{KuSt}] \label{Ginibreproduct} 
Let $n, l, \nu$ be non-negative integers with $1 \leq n \leq l$. 
Let $G$ be an $(n + \nu) \times l$ complex Ginibre matrix, and let $X$ be a random matrix of size 
$l \times n$, independent of $G$, such that the squared singular values $x_1, \ldots, x_n$ 
are a polynomial
ensemble 
\begin{equation} 
\propto \prod_{j<k} (x_k -x_j) \, \det \left[ f_{k}(x_j) \right]_{j,k=1}^n,  \qquad \text{ all } x_j > 0,
\end{equation}
 for certain functions $f_1, \ldots, f_n$ defined on $[0, \infty)$. 
Then the squared singular values $y_1, \ldots, y_n$ of $Y = GX$ are a polynomial ensemble
\begin{equation} 
\propto \prod_{j<k} (y_k -y_j) \, \det \left[ g_{k}(y_j) \right]_{j,k=1}^n,  \qquad \text{ all } x_j > 0,
\end{equation}
where
\begin{equation} 
g_k(y) = \int_0^{\infty} x^{\nu} e^{-x} f_k \left( \frac{y}{x} \right) \frac{dx}{x}, 
		\qquad y > 0.
\end{equation}
\end{theorem}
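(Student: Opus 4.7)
The plan is to condition on $X$ and then integrate out the conditioning. By the bi-unitary invariance of the complex Ginibre distribution, only the squared singular values $x_1,\ldots,x_n$ of $X$ matter: writing $X = U_X \Sigma_X V_X^*$ in SVD form, I would absorb $U_X$ into $G$ (the distribution of $GU_X$ equals that of $G$) and drop the right factor $V_X^*$, which does not affect singular values of $Y=GX$. The problem then reduces to determining the joint density of the squared singular values of $\tilde G D$, where $\tilde G$ is an $(n+\nu)\times n$ complex Ginibre matrix (the first $n$ columns of $GU_X$) and $D = \mathrm{diag}(\sqrt{x_1},\ldots,\sqrt{x_n})$.

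Next I would compute the conditional density $p(y\mid x)$. The $y_j$ are the eigenvalues of $D\tilde G^*\tilde G D$, and $W := \tilde G^*\tilde G$ is a complex Wishart matrix with density proportional to $(\det W)^\nu e^{-\Tr W}$ on positive definite Hermitian matrices. Via the change of variables $B = DWD$ on the cone, diagonalization $B = V\mathrm{diag}(y)V^*$, and the complex Harish-Chandra--Itzykson--Zuber formula
\[
\int_{U(n)} e^{\Tr(AVBV^*)}\,dV \;=\; c_n\,\frac{\det\bigl[e^{a_i b_j}\bigr]_{i,j=1}^n}{\Delta(a)\,\Delta(b)},
\]
applied with $A = -X^{-1}$ and $B=\mathrm{diag}(y_j)$ (and using $\Delta(-1/x) = \Delta(x)/\prod_j x_j^{n-1}$), I expect to reach the clean form
\[
p(y\mid x) \,\propto\, \prod_{j=1}^n y_j^\nu\,\frac{\Delta(y)}{\Delta(x)}\,\det\!\left[\frac{e^{-y_j/x_k}}{x_k^{\nu+1}}\right]_{j,k=1}^n.
\]

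Finally, I would integrate $p(y\mid x)\,p_X(x)$ over $x\in(0,\infty)^n$. The two Vandermonde factors $\Delta(x)$ cancel and Andr\'eief's identity collapses the product of two determinants under integration into one:
\[
p_Y(y) \,\propto\, \Delta(y)\,\det\!\left[\,y_j^\nu \int_0^\infty \frac{f_k(x)\,e^{-y_j/x}}{x^{\nu+1}}\,dx\,\right]_{j,k=1}^n.
\]
The substitution $t = y_j/x$ in each entry turns the inner integral into $y_j^{-\nu}\int_0^\infty t^\nu e^{-t} f_k(y_j/t)\,dt/t$, so the factor $y_j^\nu$ is absorbed and one recovers precisely the stated $g_k(y_j)$. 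This exhibits $p_Y$ as a polynomial ensemble with the claimed functions $g_k$.

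The main obstacle I anticipate is the second step: carefully tracking the powers of $x_j$ arising from the Hermitian-matrix Jacobian of $W\mapsto DWD$, from the Wishart weight $(\det W)^\nu$, and from the $\Delta(-1/x)$ denominator of HCIZ, and verifying that they combine into the entrywise kernel $e^{-y_j/x_k}/x_k^{\nu+1}$ on which the Andr\'eief step relies. Once this prefactor bookkeeping is checked, steps one and three are essentially routine.
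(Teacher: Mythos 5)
Your proposal is correct and follows essentially the same route that the paper and its reference \cite{KuSt} (where this theorem is proved) use: first derive the conditional density of the squared singular values of $GX$ for fixed $X$ via a change of variables, diagonalization, and the HCIZ integral --- yielding exactly your kernel $\propto \prod_j y_j^{\nu}\,\frac{\Delta(y)}{\Delta(x)}\det\bigl[e^{-y_j/x_k}x_k^{-\nu-1}\bigr]$ --- and then average over the polynomial ensemble of $X$ with Andr\'eief's identity, just as the paper does in its first proof of Theorem \ref{propTruncation} and Corollary \ref{TruncationTransformation} (with the HCIZ integral there replaced by the Heaviside-extended group integral). The only cosmetic difference is that you change variables at the level of the Wishart matrix $W=\tilde G^*\tilde G$ (via $B=DWD$) rather than at the level of $Y=GX$ followed by an SVD; the two parametrizations lead to the same conditional density and the remaining steps coincide.
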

Note that $g_k$ is the Mellin convolution of $f_k$ with the ``Gamma density'' $x^{\nu} e^{-x}$.

We aim at an analogue of Theorem \ref{Ginibreproduct}
for the product of $X$ with a truncated unitary matrix and find that the structure of
a polynomial ensemble is preserved. Instead of a Mellin convolution with a ``Gamma density'' we
find a Mellin convolution with a ``Beta density'' $x^\nu(1-x)^\mu$ with certain variables $\nu,\mu\in\mathbb{N}_0$ and defined on the interval $[0,1]$, see Corollary \ref{TruncationTransformation}.
This result is an immediate consequence of a theorem  on the transformation of squared singular 
values of a fixed matrix $X$ when multiplied by a truncated unitary matrix that we present 
as our main theorem, see Theorem \ref{propTruncation}.

The spectral statistics of a generic truncation of a fixed matrix $X$ is an old question. Especially in random matrix theory such truncations quite often occur because of its natural relation to the Jacobi ensemble \cite{For-book} and a modification of the supersymmetry method \cite{BEKYZ} to calculate the average of an arbitrary product of characteristic polynomials where generic projections to lower dimensional matrices are needed. Applications of truncated unitary matrices in physics can be found in quantum mechanical evolution~\cite{GNJJN}, chaotic scattering~\cite{gb3}, mesoscopic physics~\cite{Been} and quantum information~\cite{ZySo}. We claim that even in telecommunications one can certainly apply truncations of unitary matrices. Usually Ginibre matrices model the information channel between the transmitter and the receiver~\cite{TV1,TV2,BM,AkKiWe}. However if the number of channels to the environment is  on the scale of the number of transmitting channels or even smaller then deviations to the Gaussian assumption should be measurable. We guess that those deviations result from the fact that no signal is lost. It can only be absorbed by the environment. Therefore the evolution in all channels should be unitary and, thus, in the channels between the receiver and the transmitter a truncated unitary matrix.

Very recently it was shown by Ipsen and one of the authors \cite{IpKi} that truncated unitary matrices are also generally involved in the spectral statistics of products of rectangular random matrices, including products of matrices which are not Ginibre matrices or truncated unitary matrices. Moreover the truncation of matrices are also important in representation theory. For example, let $H$ be an $m\times m$ Hermitian matrix and $T$  an $n\times m$ complex matrix ($m>n$) which is a truncation of a unitary matrix $U\in\U(m)$. Then the question for the generic eigenvalues of a truncation $THT^*$ ($T^*$ is the Hermitian conjugate of $T$) is deeply related to the fact which representations of $\U(n)$ are contained in a certain representation of $\U(m)$. In particular group integrals and coset integrals like integrals over Stiefel manifolds are deeply related to representation theory, see \cite{Harish,Balan1,Balan2,HPZ,CK}. Though our results are more general they can be partially interpreted in this framework. The relation of truncated unitary matrices and representation theory goes back to a parametrization of the unitary group by Gelfand and Zeitlin\footnote{We employ the transcription of Zeitlin used in \cite{CK}.} \cite{GZ}. They found that the eigenvalues of $THT^*$ with $n=m-1$ satisfies the so-called interlacing property, i.e. $x_1\leq x'_1\leq x_2\leq x'_2\leq\cdots\leq x_{m-1}\leq x'_{m-1}\leq x_m$ where $x_1\leq\cdots\leq x_m$ are the ordered eigenvalues of $H$ and $x'_1\leq\cdots\leq x'_{m-1}$ are the ordered eigenvalues of $THT^*$. This interlacing is reflected in the  ``quantum numbers'' labelling the representations of $\U(n)$ and $\U(m)$.

As a consequence of our generalization of Theorem~\ref{Ginibreproduct} to truncated unitary matrices  we derive
the joint probability density function for the squared singular values of an arbitrary product $Y = T_r \cdots T_1$,
where each $T_j$, $j=1, \ldots, r$ is a truncation of a unitary matrix. 
We find  a polynomial ensemble with Meijer G-functions which is similar to the case of products of complex Ginibre matrices \cite{AkIpKi, KuZh}. 
The polynomial ensemble is a determinantal point process with a correlation
kernel which is a double contour integral in products and ratios of Gamma functions and can be equivalently rewritten as a onefold integral over a product of two Meijer G-functions. Based on the
double integral formula we are able to obtain hard edge scaling limits
as was done in \cite{KuZh} for the product of complex Ginibre matrices. 
In addition to the Meijer G-kernels that are already in \cite{KuZh} we 
also find certain finite rank perturbations.

All results are summarized in section~\ref{sec:res}. The proofs of these results are contained in sections~\ref{sec:proofunitaryintegral},~\ref{sec:proof1},~\ref{sec:proof2}, and ~\ref{sec:proof3}. In section~\ref{conclusio} we briefly discuss open ques-tions to this topic.

\section{Statement of results}\label{sec:res}

Let us start with some preliminaries. A $k\times l$ truncation  $T$ of a matrix $U\in\U(m)$ ($m> \max(k,l)$) is a submatrix of $U$. Note that in the case that $k+l=m$ the matrix $T$ is equivalent with an element $[T]$ in the coset $\U(m)/[\U(k)\times\U(m-k)]$ due to the embedding
\begin{equation}
T\hookrightarrow[T]=\left\{\left.U\left(\begin{array}{cc} \sqrt{I_{k}-TT^*} & T \\ T^* & \sqrt{I_{m-k}-T^*T} \end{array}\right)\in\U(m)\right| U\in\U(k)\times\U(m-k)\right\},
\end{equation}
where $I_{k}$ is the $k\times k$ identity matrix. If $k+l\neq m$ such an interpretation is not possible such that our studies are more general.

A natural measure for the truncated unitary matrix $T$ is the induced Haar measure of the unitary matrix $U$. Let $dU$ be the normalized Haar measure. Then the induced measure is
\begin{equation}
\int_{\U(n)} \prod_{a=1}^k\prod_{b=1}^l\delta^{(2)}(T_{ab}-U_{ab}) dU
\end{equation}
with $\delta^{(2)}(x+i y)=\delta(x)\delta(y)$ the Dirac delta function in the complex plane.

We are interested in the singular value statistics of the matrix $Y=TX$ where $X\in\mathbb{C}^{l\times n}$. Recall that the squared singular values $y_1,\ldots, y_n$ of the matrix $Y$ are the eigenvalues of $Y^*Y$ and analogously for the matrix $X$. The joint probability density of these squared singular values  involves the Vandermonde determinant which is given by the product $\Delta(y) = \prod_{j <k}^n (y_k - y_j)$.

We present in the following subsections five results. These results comprise the joint probability distribution of the squared singular values of a product $TX$ of a fixed matrix $X$ (subsection~\ref{sec:main-res}), of a random matrix $X$ whose squared singular values are taken from a polynomial ensemble (subsection~\ref{sec:det-res}) or from a certain  Pfaffian point process (subsection~\ref{sec:Pf-res}), and of a product of truncated unitary matrices (subsection~\ref{sec:trunc-res}). Moreover we present a remarkable group integral involved in one of our proofs which is the analogue to the  Harish~Chandra/Itzykson-Zuber (HCIZ) integral \cite{Harish,ItZu} for the Ginibre ensembles, see subsection~\ref{sec:group-res}.

\subsection{Main result}\label{sec:main-res}

All corollaries we present in our work are based on the following theorem.

\begin{theorem} \label{propTruncation}
Let $n,m,l,\nu$ be non-negative integers with $1 \leq n \leq l \leq m$ and
$m \geq n + \nu + 1$. Let  $T$ be an $(n +\nu) \times l$ truncation of a Haar distributed
unitary matrix $U$ of size $m \times m$. 
Let $X$ be a non-random matrix of size $l \times n$ with non-zero squared singular values $x_1, \ldots, x_n$,
and we assume them to be all pairwise distinct.
Then the squared singular values $y_1, \ldots, y_n$ of $Y = TX$ have a joint probability density function on $[0,\infty)^n$
\begin{equation} \label{TXdensity} 
	\propto\left(\prod_{j=1}^n x_j^{-m+n} \right) \left( \prod_{j=1}^n y_j^{\nu} \right) 
	\det \left[(x_k-y_j)_+^{m-n-\nu-1} \right]_{j,k=1}^n \frac{\Delta(y)}{\Delta(x)},
	\end{equation}
where $(x-y)_+ = \max(0,x-y)$. The missing overall constant only depends on $n, m$ and $\nu$, but is independent of $X$.
\end{theorem}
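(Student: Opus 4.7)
My plan is to reduce by unitary invariance to a canonical form of $X$, compute an explicit density for $Y$, pass to the joint singular-value density via Weyl integration, and then evaluate the residual unitary group integral by an HCIZ-type identity.

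The first step exploits right-$\U(l)$ invariance: since the Haar measure on $\U(m)$ is invariant under right multiplication by $\diag(U_X, I_{m-l})$, the truncation $TU_X$ has the same distribution as $T$ for every fixed $U_X \in \U(l)$. Writing the SVD $X = U_X \Sigma_X V_X^*$, absorbing $U_X$ into $T$, and noting that right multiplication by the $n\times n$ unitary $V_X^*$ does not change the singular values of $Y$, I may assume $X = \Sigma_X$. Then $Y = T_1 D_x^{1/2}$, where $T_1$ is the top-left $(n+\nu)\times n$ block of $U$ (itself an $(n+\nu)\times n$ truncation) and $D_x = \diag(x_1,\dots,x_n)$.

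Next, for $m \geq 2n+\nu$ the classical density formula for truncations gives that $T_1$ has density proportional to $\det(I_n - T_1^*T_1)^{m-2n-\nu}$ on $\{T_1:\,I_n > T_1^*T_1\}$. Substituting $T_1 = Y D_x^{-1/2}$ and inserting the Jacobian $\prod_i x_i^{-(n+\nu)}$ gives $p(Y) \propto \prod_i x_i^{-m+n}\,\det(D_x - Y^*Y)^{m-2n-\nu}$ on $\{Y:\,D_x > Y^*Y\}$. Passing to the joint density of squared singular values of $Y$ via Weyl's integration formula for rectangular $(n+\nu)\times n$ complex matrices contributes the factor $\prod_j y_j^{\nu}\Delta(y)^2$. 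Since the density depends on $Y$ only through $Y^*Y = V D_y V^*$ with $V \in \U(n)$, the integral over the Stiefel factor is just a constant, and one is left with
\[
    p(y) \;\propto\; \prod_{j=1}^n y_j^{\nu}\,\Delta(y)^2 \prod_{i=1}^n x_i^{-m+n}\int_{\U(n)} \det\bigl(D_x - V D_y V^*\bigr)^{m-2n-\nu}\,dV,
\]
with the integrand understood to vanish outside the positive-definite region.

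The decisive step is then to prove the HCIZ-type identity
\[
    \int_{\U(n)} \det\bigl(D_x - V D_y V^*\bigr)^{\alpha}\,dV \;=\; C_{n,\alpha}\,\frac{\det\bigl[(x_k - y_j)_+^{\alpha+n-1}\bigr]_{j,k=1}^n}{\Delta(x)\Delta(y)},
\]
with $\alpha = m-2n-\nu$, so that the right-hand exponent becomes $m-n-\nu-1$. This is precisely the group integral announced in subsection~\ref{sec:group-res}. Inserting it into the previous display and simplifying $\Delta(y)^2/\Delta(y) = \Delta(y)$ yields exactly \eqref{TXdensity}. The principal obstacle is proving this HCIZ analogue, which is the deepest new ingredient of the paper and merits its own dedicated section; it is what plays the role of the classical Harish-Chandra/Itzykson-Zuber formula in the Ginibre case. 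A subsidiary technical issue is that the density formula for $T_1$ as written requires $m-2n-\nu \geq 0$; for the remaining range $n+\nu+1 \leq m < 2n+\nu$ the result must be justified by analytic continuation in $m$, or by working directly with the delta-function representation of the Haar measure before passing to densities.
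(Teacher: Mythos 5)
Your argument through the group integral coincides, step for step, with the paper's own first proof (section~\ref{sec:proof1}): the reduction to $l=n$ and diagonal $X$ is Remark~\ref{remarkln}, the truncation density is \eqref{truncated-distribution}, and the Jacobians, the singular value decomposition of $Y$, and the evaluation of the residual $\U(n)$ integral via the identity \eqref{Kieburg-integral} all match. Citing Theorem~\ref{GRunitaryintegral} rather than proving it is legitimate, since the paper establishes it independently (via the Ingham--Siegel formula and two HCIZ integrals) in section~\ref{sec:proofunitaryintegral}, with no circularity; your phrasing with the integrand ``understood to vanish outside the positive-definite region'' is equivalent to the Heaviside factor $\theta(A-UBU^*)$ there.

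The genuine gap is the range $n+\nu+1 \leq m < 2n+\nu$, which the theorem covers but your proof does not. In that range the law of $T_1$ is singular: writing $W$ for the $m\times n$ matrix of the relevant columns of $U$, so that $W^*W=I_n$, and $T_2$ for the block of $W$ complementary to $T_1$, one has $T_1^*T_1 = I_n - T_2^*T_2$ where $T_2^*T_2$ has rank at most $m-n-\nu$; hence $T_1^*T_1$ almost surely has the eigenvalue $1$ with multiplicity $2n+\nu-m \geq 1$ and admits no Lebesgue density, so a density-based argument cannot even start. Neither of your two suggested repairs is carried out, and neither is routine. ``Analytic continuation in $m$'' is delicate because $m$ is the dimension of the unitary group, so the left-hand side exists only at integer $m$; to interpolate one would need, say, rationality in $m$ of Haar moments (Weingarten calculus) together with a Carlson-type uniqueness theorem with growth bounds, none of which you supply. ``Working directly with the delta-function representation of the Haar measure'' is precisely the paper's second proof (section~\ref{sec:proof2}): a six-step computation with polynomial test functions involving the Fourier--Laplace representation of the matrix delta function, Gaussian integration over the complementary block $M_2$, a nontrivial contour-shift argument (Lemma~\ref{lemma:shift}), two HCIZ integrals, and Andr\'eief's formula. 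Gesturing at that route is not a proof; as it stands, your argument establishes the theorem only under the stronger hypothesis $m \geq 2n+\nu$.
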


We emphasize that this theorem can be easily generalized to a matrix $X$ where one or more of its squared singular values $x_j$ coincide. Then the result remains valid if we replace \eqref{TXdensity} by an appropriate limit using L'H\^opital's rule.

\begin{remark} \label{remarkln}
 We can readily set $l=n$ without loss of generality. The reason for this is exactly the same one already discussed in \cite{IpKi}. We can perform a singular value decomposition of $X=U_{\rm L}X'U_{\rm R}$ ($U_{\rm L}\in\U(l)$, $U_{\rm R}\in\U(n)$, and $X'$ a rectangular matrix with only one of its main-diagonals non-zero) and absorb the unitary matrix $U_{\rm L}$ in $T$. Since $l\geq n$ the matrix $X'$ has $l-n$ rows equal to zero. This structure projects the matrix $T$ to an even smaller matrix $T'$ of size $(n +\nu) \times n$. Let $\widetilde{X}$ be the matrix $X'U_{\rm R}$ without these zero rows, in particular $\widetilde{X}$ is $n\times n$ dimensional. Then we can consider the product $Y=TX=T'\widetilde{X}$ and apply the Theorem~\ref{propTruncation} for the fixed matrix $\widetilde{X}$ and the truncated unitary matrix $T'$. Note that $\widetilde{X}$ and $X$ have the same singular values.
\end{remark}

\subsection{An integral over the unitary group}\label{sec:group-res}

We give two proofs of Theorem \ref{propTruncation}, the first one in section \ref{sec:proof1} and the
second one in section~\ref{sec:proof2}. The first proof only works in the case where $m \geq 2n + \nu$. In this case the truncation $T$ does not have generic squared singular values which are equal to $1$. Then $T$ is sufficiently small compared to the dimension of the underlying larger unitary matrix $U$. In this case the set of all matrices $T$ which have a squared singular value equal to $1$ is a set of measure zero.

The second proof works in all cases and is based on a calculation with test functions.  
Nonetheless, we decide to keep the first proof, too, since it is based on a remarkable integral over
the unitary group that is of interest in its own right.
It replaces the HCIZ integral that was used in
the proof of Theorem \ref{Ginibreproduct}. The HCIZ integral formula \cite{Harish,ItZu} is the following well-known
integral over the unitary group $\mathcal U(n)$, 
\begin{equation} \label{HCIZ-formula} 
	\int_{\mathcal U(n)} \exp[t\Tr A U BU^*] dU = \left(\prod_{j=1}^{n-1} j! \right) 
	\frac{\det \left[ \exp(t a_j b_k) \right]_{j,k=1}^n}{t^{(n^2-n)/2}\Delta(a) \Delta(b)},
	\end{equation}
where $A$ and $B$ are Hermitian matrices with pairwise distinct eigenvalues $a_1, \ldots, a_n$, and $b_1, \ldots, b_n$,
respectively, and $t \in \mathbb{C} \setminus \{0\}$. 

The new integral over the unitary group involves the Heaviside step function of a matrix argument, defined on Hermitian matrices $X$ as
\begin{equation} \label{Heaviside-theta}
	\theta(X) = \begin{cases} 1, & \text{ if $X$ is positive definite,} \\
	0, & \text{ otherwise.} 
	\end{cases} \end{equation}
Then the generalization of \eqref{HCIZ-formula} for our purposes is the following theorem which is proven in section~\ref{sec:proofunitaryintegral}.

\begin{theorem} \label{GRunitaryintegral}
Let $A$ and $B$ be $n \times n$ Hermitian matrices with respective eigenvalues $a_1, \ldots, a_n$ and 
$b_1, \ldots, b_n$ and pairwise distinct.
Let $dU$ be the normalized Haar measure on the unitary group $\mathcal U(n)$. Then for every  $p \geq 0$,
\begin{equation} \label{Kieburg-integral} 
	\int_{\mathcal U(n)}  \det \left(A-UBU^*\right)^{p} \theta(A-UBU^*) \, dU
	= c_{n,p} \frac{\det \left[\left( a_j - b_k\right)_+^{p+n-1} \right]_{j,k=1}^n}{\Delta(a) \Delta(b)}.
	\end{equation}
The constant $c_{n,p}$ in \eqref{Kieburg-integral} is  
\begin{equation} \label{cnp} 
	c_{n,p} = \prod_{j=0}^{n-1} \binom{p+n-1}{j}^{-1}
	\end{equation}
	and $\left( a_j - b_k\right)_+$ is $\left( a_j - b_k\right)$ if the difference is positive and otherwise vanishes.
If some of the $a_j$ and/or $b_j$ coincide we have to take the formula \eqref{Kieburg-integral} 
in the limiting sense using l'H\^opital's rule.
\end{theorem}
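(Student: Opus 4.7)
By unitary invariance of the Haar measure on $\U(n)$, the plan is to first reduce to $A=\diag(a_1,\ldots,a_n)$ and $B=\diag(b_1,\ldots,b_n)$ with the $a_j$ and $b_k$ pairwise distinct; the degenerate cases then follow from l'H\^opital's rule applied to both sides. The central idea is to replace $\det(\cdot)^{p}\theta(\cdot)$ by an integral representation that turns the $U$-integration into an instance of HCIZ.

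To that end, I would use the inverse of the complex matrix--gamma identity
\begin{equation*}
	\int_{Y>0} e^{-\Tr(SY)}\det(Y)^{p}\,dY\;=\;\Gamma_n(p+n)\det(S)^{-(p+n)},\qquad \Real S>0,\ p>-1,
\end{equation*}
with $\Gamma_n(s)=\pi^{n(n-1)/2}\prod_{j=0}^{n-1}\Gamma(s-j)$, to obtain the Mellin--Barnes representation
\begin{equation*}
	\det(Y)^{p}\theta(Y)\;=\;\frac{\Gamma_n(p+n)}{(2\pi)^{n^{2}}}\int_{\Real S=C} e^{\Tr(SY)}\det(-S)^{-(p+n)}\,dS
\end{equation*}
over a suitable Hermitian contour $C$. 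Inserting $Y=A-UBU^{*}$ and exchanging the order of integration, the inner $U$-integral becomes, by \eqref{HCIZ-formula},
\begin{equation*}
	\int_{\U(n)} e^{-\Tr(SUBU^{*})}\,dU\;=\;\Bigl(\prod_{j=1}^{n-1}j!\Bigr)\,\frac{\det\!\bigl[e^{-s_{j}b_{k}}\bigr]_{j,k=1}^{n}}{\Delta(s)\Delta(b)},
\end{equation*}
where $s_1,\ldots,s_n$ are the eigenvalues of $S$. I would then diagonalise $S=V\diag(s_{1},\ldots,s_{n})V^{*}$ via Weyl's integration formula and apply HCIZ once more to the factor $e^{\Tr(SA)}$, integrating out the angular coset in $\U(n)/T^{n}$.

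What remains is an $n$-fold contour integral over $s_1,\ldots,s_n$. Expanding both HCIZ determinants by the Leibniz rule, this factorises into one-dimensional pieces of the form
\begin{equation*}
	\frac{1}{2\pi i}\int_{c-i\infty}^{c+i\infty} s^{-(p+n)}e^{s(a_{j}-b_{k})}\,ds\;=\;\frac{(a_{j}-b_{k})_{+}^{p+n-1}}{\Gamma(p+n)},\qquad c>0,
\end{equation*}
which vanishes for $a_j<b_k$ by closing the contour to the right. Summing over the two permutations via antisymmetry of the Vandermondes produces $\det[(a_j-b_k)_{+}^{p+n-1}]/(\Delta(a)\Delta(b))$, matching \eqref{Kieburg-integral}. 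The constant $c_{n,p}$ then follows by collecting $\Gamma_n(p+n)$, two copies of the HCIZ prefactor $\prod_{j=1}^{n-1}j!$, a factor $n!$ from the permutation sum, and $\Gamma(p+n)^{-n}$ from the one-dimensional integrals; the $\pi$-powers in $\Gamma_n$ cancel against those in the Weyl normalisation, and the remaining Gamma-function ratio collapses to $\prod_{j=0}^{n-1}\binom{p+n-1}{j}^{-1}$. The main obstacle will be justifying the Fubini-type interchange under the Mellin--Barnes contour: I would first establish the identity for $p$ large enough that all integrands decay absolutely, and then extend to all $p\geq 0$ by analytic continuation, since both sides of \eqref{Kieburg-integral} are analytic in $p$ on a common right half-plane.
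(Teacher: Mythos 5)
Your proposal is correct and follows essentially the same route as the paper: your matrix Mellin--Barnes representation of $\det(Y)^p\,\theta(Y)$ is precisely the Ingham--Siegel formula the paper invokes, after which both arguments exchange the order of integration, apply the HCIZ formula twice (once for the $U$-integral, once for the angular part of the auxiliary matrix on the contour), and collapse the resulting $n$-fold integral into one-dimensional inverse Laplace transforms giving $(a_j-b_k)_+^{p+n-1}$ — your Leibniz expansion of the two determinants being exactly the content of the Andr\'eief identity used in the paper. The only cosmetic difference is how the constant $c_{n,p}$ is pinned down: you track it through the computation, whereas the paper avoids this bookkeeping by comparing with the Gross--Richards integral \eqref{GrossRichards-integral2}.
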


Theorem \ref{GRunitaryintegral} is known to hold when $\min a_k  \geq \max b_j$, see \cite[formula (3.21)]{GrRi} and also \cite{HaOr}. Then the Heaviside step function drops out. However if this condition is not met it is even more surprising that the result still looks that simple, especially that the result can be expressed in exactly the same determinantal form as if the condition has been met. We emphasise that in the general case we usually do not effectively integrate over the whole group $\U(n)$ but a smaller subset. The contribution of the subset of $U \in \U(n)$ for which $A-UBU^*$ is not positive definite vanishes due to the Heaviside step function.

What does really happen in the integral~\eqref{Kieburg-integral}? To understand this we may choose $A$ and $B$ diagonal since their diagonalizing matrices can be absorbed in the unitary group integral. Recall that Harish~Chandra~\cite{Harish} traced the integral~\eqref{HCIZ-formula} back to a sum over the Weyl group acting on $B$. The Weyl group of $\U(n)$ is the permutation group of $n$ elements, $\mathcal{S}_n\subset\U(n)$. This sum results into the determinant in \eqref{HCIZ-formula}. Exactly such a thing also happens here. The difference is that the action of $\omega\in\mathcal{S}_n\subset\U(n)$ to the matrix $M=A-\omega B\omega^*$  sometimes yields no contribution to the integral because of the Heaviside step function. Because $M$ is also diagonal  the  Heaviside step function of $M$ factorizes into Heaviside step functions of $a_k-b_{\omega(k)}$ for $k=1,\ldots,n$ telling us that $a_k-b_{\omega(k)}$ has to be positive definite. Despite the fact that some of the terms in the sum over the Weyl group vanish we can extend the sum over the whole group because they are zero without changing the result. This is the reason why inside the determinant of \eqref{Kieburg-integral} we have  $\left( a_j - b_k\right)_+^{p+n-1}$ and not $\left( a_j - b_k\right)^{p+n-1}$. Hence, one can indeed understand Theorem~\ref{GRunitaryintegral} by this intuition.

\subsection{Transformation of polynomial ensemble}\label{sec:det-res}

Our main application of Theorem \ref{propTruncation} is to the situation where $X$
is random and statistically independent of $T$, in such a way that its squared singular values 
are a polynomial ensemble on $[0, \infty)$. The proof relies on 
the well-known Andr\'eief formula \cite{Andre,DeGi},
\begin{multline} \label{Andreief} 
	\int_{X^n} \det\left[ \varphi_k(x_j) \right]_{j,k=1}^n \, \det \left[ \psi_j(x_k) \right]_{j,k=1}^n 
		d\mu(x_1) \cdots d\mu(x_n) \\
	= n! \det \left[ \int_X \varphi_k(x) \psi_j(x) d\mu(x) \right]_{j,k=1}^n,
	\end{multline}
that is valid for arbitrary functions $\varphi_j$ and $\psi_k$ on a measure space $(X,\mu)$ such that the integrals exist.
The integral \eqref{Andreief} is used several times in our proofs.

With the help of Andr\'eief's formula one can readily deduce from Theorem~\ref{propTruncation} the following Corollary.

\begin{corollary} \label{TruncationTransformation}
Let $n, m, l, \nu$ and $T$ be as in Theorem \ref{propTruncation}. 
Let $X$ be a random matrix of size $l \times n$, independent of $T$,
such that the squared singular values $x_1, \ldots, x_n$ of $X$ have  the joint probability density function
\begin{equation} \label{f-pol-ensemble} 
	\propto \Delta(x) \, \det \left[ f_{k}(x_j) \right]_{j,k=1}^n,  
	\qquad \text{ all } x_j > 0, 
	\end{equation}
for certain distributions $f_1, \ldots, f_n$ such that the moments $\int_0^\infty f(x) x^a dx$ with $a=0,\ldots,n-1$ exist.
Then the squared singular values $y_1, \ldots, y_n$ of $Y = TX$ have the joint probability density
\begin{equation} \label{g-pol-ensemble} 
	\propto \Delta(y) \, \det \left[ g_{k}(y_j) \right]_{j,k=1}^n, \qquad 
	\text{ all } y_j > 0, 
	\end{equation}
where 
\begin{equation} \label{gk-definition} 
	g_k(y) = \int_0^1 x^{\nu} (1-x)^{m-n-\nu-1} f_k \left( \frac{y}{x} \right) \frac{dx}{x}, \qquad y > 0,
	\end{equation}
is the Mellin convolution of $f_k$ with the ``Beta distribution'' $x^{\nu} (1-x)^{m-n-\nu-1}$ on $[0,1]$.
\end{corollary}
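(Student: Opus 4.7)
The plan is to condition on the realization of $X$, apply Theorem~\ref{propTruncation} pointwise, and then integrate out the squared singular values $x_1,\ldots,x_n$ using Andr\'eief's formula~\eqref{Andreief}. The set where the $x_j$ coincide has measure zero, so the hypothesis in Theorem~\ref{propTruncation} that the $x_j$ be pairwise distinct is not an issue.

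More concretely, I would first multiply the conditional density \eqref{TXdensity} of $y_1,\ldots,y_n$ given $x_1,\ldots,x_n$ by the joint density \eqref{f-pol-ensemble} of the $x_j$. The factor $\Delta(x)$ in the numerator of \eqref{f-pol-ensemble} cancels the $\Delta(x)$ in the denominator of \eqref{TXdensity}, leaving, up to constants independent of $X$,
\begin{equation*}
\Bigl(\prod_{j=1}^n y_j^\nu\Bigr)\Delta(y)\int_{[0,\infty)^n}\Bigl(\prod_{j=1}^n x_j^{-m+n}\Bigr)\det\bigl[(x_k-y_j)_+^{m-n-\nu-1}\bigr]_{j,k=1}^n\det\bigl[f_k(x_j)\bigr]_{j,k=1}^n\,dx_1\cdots dx_n.
\end{equation*}

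Next I would absorb $\prod_j x_j^{-m+n}$ into one of the determinants (say the second one, column by column), and transpose the first determinant so that both are of the form required by Andr\'eief's formula~\eqref{Andreief}. Applying \eqref{Andreief} with Lebesgue measure on $[0,\infty)$ collapses the $n$-fold integral to $n!$ times the single determinant whose $(j,k)$-entry is
\begin{equation*}
\int_{y_k}^\infty (x-y_k)^{m-n-\nu-1}\,x^{-m+n}\,f_j(x)\,dx.
\end{equation*}

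The final step is a change of variables $x=y_k/u$ with $u\in(0,1)$, which turns the above entry into $y_k^{-\nu}\,g_j(y_k)$, with $g_j$ exactly as in \eqref{gk-definition}. Pulling the factor $\prod_k y_k^{-\nu}$ out of the determinant cancels the prefactor $\prod_j y_j^\nu$, and transposing the determinant back delivers $\Delta(y)\,\det[g_k(y_j)]_{j,k=1}^n$, as claimed. I do not expect any serious obstacle; the only thing to be careful about is the bookkeeping of row/column indices so that Andr\'eief applies cleanly, and the change of variables producing the Beta weight with the correct exponents $\nu$ and $m-n-\nu-1$.
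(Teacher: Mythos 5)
Your proposal is correct and follows essentially the same route as the paper's own proof: condition on $X$, cancel the Vandermonde factors, apply Andr\'eief's formula \eqref{Andreief} to the $n$-fold integral (your absorbing $\prod_j x_j^{-m+n}$ into a determinant is equivalent to the paper's choice $d\mu(x)=x^{-m+n}\,dx$), and then the substitution $x=y/u$ produces the Beta weight in \eqref{gk-definition}. The bookkeeping of the $y^\nu$ factors and the measure-zero remark about coinciding $x_j$ are handled exactly as one would expect, so there is nothing to add.
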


We underline again that one can set $l=n$ without loss of generality.

\begin{proof}
We average \eqref{TXdensity} over $x_1, \ldots, x_n$ with the joint probability density function \eqref{f-pol-ensemble}.
The $n$-fold integral is evaluated using \eqref{Andreief} with $d\mu(x)  = x^{-m+n} dx$ on $X=[0,\infty)$ and identifying the functions $\varphi_k(x) =  f_k(x)$ and  $\psi_j(x) = (x-y_j)_+^{m-n-\nu-1}$. Then we find that the squared singular values of $Y$ have a joint probability density
\begin{equation}
\propto n! \Delta(y) \left( \prod_{j=1}^n y_j^{\nu} \right) 
	\det \left[ \int_0^{\infty} x^{-m+n} f_k(x) (x-y_j)_+^{m-n-\nu-1} dx \right]_{j,k=1}^n, 
	\end{equation}
which is of the form \eqref{g-pol-ensemble} with functions
\begin{align}
g_k(y) & = y^{\nu} \int_y^{\infty} x^{-m+n} f_k(x) (x-y)^{m-n-\nu-1} dx  \nonumber\\
	& =  y^{\nu} \int_0^1 \left(\frac{y}{x}\right)^{-m+n} \left(\frac{y}{x} - y\right)^{m-n-\nu-1} 
		f_k\left(\frac{y}{x}\right)  \frac{y dx}{x^2},
\end{align}
Here, we applied the change of variables $x \mapsto y/x$. This 
is easily seen to reduce to \eqref{gk-definition}. 
 \hfill{$\Box$}
\end{proof}			
Corollary \ref{TruncationTransformation} was obtained in the recent preprint \cite{Kui} in a different way.

Corollary \ref{TruncationTransformation} is the analogue of Theorem \ref{Ginibreproduct}
for the case of a multiplication with truncated unitary matrix. It is interesting to
note that Theorem \ref{Ginibreproduct} is obtained from Corollary \ref{TruncationTransformation}
in the limit when $m \to \infty$ while keeping $n$, $l$, and $\nu$ fixed, since $\sqrt{m} T \to G$ where $G$ is
a complex Ginibre matrix. Recall that $m$ is the dimension of the unitary matrix that
$T$ is a truncation of.

\subsection{Transformation of Pfaffian ensembles}\label{sec:Pf-res}

Theorem \ref{propTruncation} can also be applied in a Pfaffian context.
Instead of \eqref{Andreief}  we now use the de Bruijn formula \cite{DeB}, see \cite[Proposition 6.3.5]{For-book}
\begin{multline} \label{DeBruijn}
	\int_{X^n} \det\left[ \varphi_k(x_j) \right]_{j,k=1}^n \, \Pf \left[ f(x_j,x_k) \right]_{j,k=1}^n 
		d\mu(x_1) \cdots d\mu(x_n) \\
	= n! \Pf \left[ \int_X \int_X \varphi_j(x) \varphi_k(y) f(x,y) d\mu(x) d\mu(y) \right]_{j,k=1}^n,
	\end{multline}
which is valid for $n$ even and  $f$ an anti-symmetric function on $X \times X$, i.e., $f(y,x) = -f(x,y)$
for all $x,y \in X$, such that all integrals exist. 
Then the following result is a consequence of Theorem \ref{propTruncation}  which can be deduced in a similar way as we obtained
Corollary \ref{TruncationTransformation}. Therefore we skip the proof and only state the Corollary.

\begin{corollary} \label{PfaffianTransform}
Let $n,m,l, \nu$ and $T$ be as in Theorem \ref{propTruncation}
with $n$ even.
Let $X$ be independent of $T$ such that the squared singular values
of $X$ have the joint probability density function
\begin{equation}
 \propto \Delta(x) \Pf \left[ f(x_j, x_k) \right]_{j,k=1}^n, \qquad \text{ all } x_j > 0,
\end{equation}
for a certain anti-symmetric distribution $f$ on $[0,\infty) \times [0,\infty)$ such that the mixed moments $\int_0^{\infty}\int_0^\infty x_1^jx_2^k f(x_1,x_2)dx_1dx_2$ exist for all $j,k=0,\ldots,n-1$.
Then the squared singular values $y_1, \ldots, y_n$ of $Y = TX$ have joint
probability density
\begin{equation}
  \propto \Delta(y) \Pf \left[ g(y_j,y_k) \right]_{j,k=1}^n, \qquad \text{ all } y_j > 0,
\end{equation}
where 
\begin{equation}
g(y_1, y_2) = \int_0^1 \int_0^1 x_1^{\nu} (1-x_1)^{m-n-\nu-1} x_2^{\nu} (1-x_2)^{m-n-\nu-1}
	f\left( \frac{y_1}{x_1}, \frac{y_2}{x_2}\right) \frac{dx_1}{x_1} \frac{dx_2}{x_2}.
\end{equation}
\end{corollary}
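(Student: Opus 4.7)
The plan is to mimic the proof of Corollary~\ref{TruncationTransformation} step by step, replacing Andr\'eief's formula by the de Bruijn formula \eqref{DeBruijn}. First, insert the conditional density \eqref{TXdensity} from Theorem~\ref{propTruncation} together with the joint density of the $x_j$ and integrate in $x$. The Vandermonde factor $\Delta(x)$ coming from the input ensemble cancels against the $\Delta(x)$ in the denominator of \eqref{TXdensity}, and the factor $\prod_k x_k^{-m+n}$ can be absorbed column-wise into the determinant. After these manipulations the quantity to be integrated over $(x_1,\ldots,x_n)\in[0,\infty)^n$ takes the form
\begin{equation*}
\left(\prod_{j=1}^n y_j^\nu\right)\Delta(y)\,\det\!\left[x_k^{-m+n}(x_k-y_j)_+^{m-n-\nu-1}\right]_{j,k=1}^n\Pf\!\left[f(x_j,x_k)\right]_{j,k=1}^n.
\end{equation*}

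Next, set $\varphi_k(x):=x^{-m+n}(x-y_k)_+^{m-n-\nu-1}$ and transpose the determinant so that the integrand is exactly of the shape $\det[\varphi_k(x_j)]\Pf[f(x_j,x_k)]$ required by \eqref{DeBruijn}, with $X=[0,\infty)$ and $d\mu(x)=dx$. The existence of the mixed moments of $f$ guarantees that all integrals below are absolutely convergent, so de Bruijn's formula applies. It converts the $n$-fold integral into a single Pfaffian, yielding a density
\begin{equation*}
\propto \Delta(y)\left(\prod_{j=1}^n y_j^\nu\right)\Pf\!\left[\tilde g(y_j,y_k)\right]_{j,k=1}^n,
\end{equation*}
where
\begin{equation*}
\tilde g(y_j,y_k)=\int_{y_j}^\infty\!\!\int_{y_k}^\infty x_1^{-m+n}(x_1-y_j)^{m-n-\nu-1} x_2^{-m+n}(x_2-y_k)^{m-n-\nu-1}f(x_1,x_2)\,dx_1\,dx_2.
\end{equation*}
Antisymmetry of $\tilde g$ is inherited from antisymmetry of $f$ by the obvious change of dummy variables.

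Finally, absorb the external factors $y_j^\nu$ into the Pfaffian via the identity $\Pf[D M D]=\det(D)\Pf[M]$ applied to the diagonal matrix $D=\operatorname{diag}(y_1^\nu,\ldots,y_n^\nu)$, so that the entries become $y_j^\nu y_k^\nu\tilde g(y_j,y_k)$. Perform the substitutions $x_1\mapsto y_j/x_1$ and $x_2\mapsto y_k/x_2$ (they act independently on the two variables of integration, exactly as in the proof of Corollary~\ref{TruncationTransformation}); each transforms its variable into a Mellin convolution with the Beta density $x^\nu(1-x)^{m-n-\nu-1}$ on $[0,1]$, and the powers of $y_j$ and $y_k$ cancel precisely. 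The resulting entry is $g(y_j,y_k)$ as claimed.

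The only substantive step beyond the determinantal case is the verification that the integrand fits the de Bruijn template and that the double change of variables in the Pfaffian entries reproduces the symmetric Beta convolution. Both are routine: the determinant/Pfaffian structure decouples the two integrations and the $y$-powers track correctly via $\Pf[DMD]=\det(D)\Pf[M]$, so the main obstacle is purely the careful bookkeeping of the exponents; no new analytic difficulty arises relative to Corollary~\ref{TruncationTransformation}.
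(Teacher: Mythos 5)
Your proof is correct and is precisely the route the paper intends: the paper skips the proof of Corollary \ref{PfaffianTransform}, saying only that it follows from Theorem \ref{propTruncation} in the same way as Corollary \ref{TruncationTransformation} with de Bruijn's formula \eqref{DeBruijn} replacing Andr\'eief's formula \eqref{Andreief}, which is exactly what you carry out. Your bookkeeping checks out: the cancellation of $\Delta(x)$, the absorption of $x_k^{-m+n}$ into the determinant, the factorized substitutions $x_1\mapsto y_j/x_1$, $x_2\mapsto y_k/x_2$, and the identity $\Pf[DMD]=\det(D)\Pf[M]$ for the $y^{\nu}$ factors all reproduce the claimed kernel $g$.
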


This result can be combined with Corollary~\ref{TruncationTransformation} for ensembles where we have a mixture of orthogonal and skew-orthogonal polynomials, i.e., the joint probability density of the squared singular values of $X$ is given by 
\begin{equation}\label{mix}
 \propto \Delta(x) \Pf \left[\begin{array}{cc} f(x_j, x_k) & f'_i(x_j) \\ -f'_i(x_j) & 0 \end{array}\right]_{\substack{j,k=1,\ldots,n\\ i=1,\ldots,n'}}, 
 \qquad \text {all } x_j > 0,
\end{equation}
where $n+n'$ is even and $f'_i$ is an additional set of distributions. Also this structure carries over to the product $TX$ as can be easily checked.

The structure~\eqref{mix} is not only academically. It appears if $X$ is real and $n$ is odd (in this case we have $n'=1$), see for example the real Laguerre ensemble \cite{Methabook} or the real Jacobi ensemble \cite{For-book}. Also the case $n'>1$ appears naturally in random matrix theory. For applications in QCD a random matrix model was proposed which breaks the Gaussian unitary ensemble by the chiral Gaussian unitary ensemble \cite{DSV}. The joint probability density of this random matrix ensemble has the form~\eqref{mix}, see \cite{AN,Kieburg}.

\subsection{Products of truncated unitary matrices}\label{sec:trunc-res}

Let us come to the product of a finite number of truncated unitary matrices. The joint probability density of the squared singular values readily follows from Corollary \ref{TruncationTransformation}. As in the case of multiplying Ginibre matrices the Meijer G-functions play a crucial role in the spectral statistics.
Meijer G-functions are defined via a contour integral
 \begin{multline} \label{MeijerG-def}
G_{p,q}^{m,n} \left( \left. {a_1,\dots, a_n;a_{n+1}, \ldots,a_p \atop b_1,\dots, b_m;b_{m+1}, \ldots,b_q} \right|z\right) \\
= \frac{1}{2\pi i}
\int_C {\prod\limits_{j=1}^m \Gamma ( b_j + s) \prod\limits_{j=1}^n \Gamma(1 - a_j - s) \over
\prod\limits_{j=m+1}^q \Gamma (1-  b_j - s) \prod\limits_{j=n+1}^p \Gamma(a_j + s)}  z^ {-s} ds, 
\end{multline}
where the contour $C$ separates the poles of $\prod_{j=1}^m \Gamma(b_j+s)$ from the poles of  $\prod_{j=1}^n \Gamma(1-a_j-s)$, see \cite{BeSz,Luk} for more details.

We apply Corollary \ref{TruncationTransformation} to the product of $r$ truncated
unitary matrices. 
For $j=1, \ldots, r$, let $T_j$ be a matrix of size $(n + \nu_j) \times (n + \nu_{j-1})$  
where $\nu_0 = 0$ and $\nu_1, \ldots, \nu_r$ are non-negative integers.
Suppose $T_j$ is the truncation of a Haar distributed unitary matrix of size $m_j \times m_j$
where $m_1 \geq 2n + \nu_1$ and $m_j \geq n+ \nu_j+1$ for $j=2, \ldots, r$. The
squared singular values of $T_1$ have the joint probability density \eqref{Jacobi-ensemble}
with parameters $n_1$, $m_1$ and $\nu_1$. This is a polynomial ensemble~\cite{For-book}
\begin{equation} \label{Jacobiensemble}
 \frac{1}{Z_n} \Delta(x) \det \left[ w_k^{(1)}(x_j) \right]_{j,k=1}^n 
	\end{equation} 
with 
\begin{equation} \label{defwk1} 
	w_k^{(1)}(x) = \begin{cases} x^{\nu_1+k-1} (1-x)^{m_1-2n-\nu_1}, & \text{ if } 0 < x < 1, \\
	0, & \text{otherwise}. \end{cases} 
	\end{equation} 
	The constant $Z_n$ normalizes the joint probability density \eqref{Jacobiensemble}.
	Then we find the following Corollary.

\begin{corollary} \label{corTruncations}
Let $Y = T_r \cdots T_1$ with truncated unitary matrices $T_j$ as described above.
Then the squared singular values of $Y$ have the joint probability density
\begin{equation} \label{wkr-ensemble} 
	\frac{1}{Z_n^{(r)}} \, \prod_{j < k} (y_k-y_j) \, \det \left[ w_k^{(r)}(y_j) \right]_{j,k=1}^n ,
	\end{equation}
where $w_k^{(r)}$ is given by \eqref{defwk1} in the case $r=1$ and by
\begin{equation} \label{defwkr} 
	w_k^{(j)}(y) = \int_0^1 x^{\nu_j}(1-x)^{m_j-n-\nu_j-1} w_k^{(j-1)} \left( \frac{y}{x}\right) \frac{dx}{x}
	\end{equation}
for $j=2, \ldots, r$ when $r \geq 2$.
\end{corollary}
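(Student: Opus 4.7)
The plan is to prove this by induction on $r$, iterating Corollary \ref{TruncationTransformation}. For the base case $r=1$, I would invoke the classical result that the squared singular values of an $(n+\nu_1)\times n$ truncation of a Haar-distributed $m_1\times m_1$ unitary matrix, under the assumption $m_1\geq 2n+\nu_1$, lie almost surely in $(0,1)$ and follow the Jacobi ensemble with joint density proportional to $\Delta(x)^2\prod_{j=1}^n x_j^{\nu_1}(1-x_j)^{m_1-2n-\nu_1}$ (see, e.g., \cite{For-book,ZySo}). Rewriting $\Delta(x)^2=\Delta(x)\det[x_j^{k-1}]_{j,k=1}^n$ and absorbing the common factor $x_j^{\nu_1}(1-x_j)^{m_1-2n-\nu_1}$ column-wise into the determinant immediately produces the polynomial-ensemble form $\propto\Delta(x)\det[w_k^{(1)}(x_j)]_{j,k=1}^n$ with $w_k^{(1)}$ as defined in \eqref{defwk1}.

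For the inductive step, assume the statement holds at level $r-1\geq 1$. Set $Y_{r-1}:=T_{r-1}\cdots T_1$; this is a random matrix of size $(n+\nu_{r-1})\times n$, and by the induction hypothesis its squared singular values form a polynomial ensemble with density functions $f_k:=w_k^{(r-1)}$. Since the truncations $T_1,\dots,T_r$ are mutually independent, $T_r$ is independent of $Y_{r-1}$. I would then apply Corollary \ref{TruncationTransformation} to $Y_r=T_r Y_{r-1}$ with parameters $l=n+\nu_{r-1}$, $\nu=\nu_r$ and $m=m_r$. The dimensional constraints $1\leq n\leq n+\nu_{r-1}\leq m_r$ and $m_r\geq n+\nu_r+1$ are precisely the assumptions stated on the $\nu_j$ and $m_j$. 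The corollary then yields that the squared singular values of $Y_r$ form a polynomial ensemble $\propto\Delta(y)\det[g_k(y_j)]_{j,k=1}^n$ where $g_k$ is the Mellin convolution of $f_k=w_k^{(r-1)}$ with $x^{\nu_r}(1-x)^{m_r-n-\nu_r-1}$, i.e.\ exactly the defining relation \eqref{defwkr} for $w_k^{(r)}$. This closes the induction.

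The only minor point of care is the integrability hypothesis of Corollary \ref{TruncationTransformation}, namely that the moments $\int_0^\infty w_k^{(r-1)}(x)\,x^a\,dx$ exist for $a=0,\ldots,n-1$. A short sub-induction handles this: $w_k^{(1)}$ is supported in $[0,1]$ and bounded there, and the Mellin convolution of a function supported in $[0,1]$ with a Beta density supported in $[0,1]$ is again supported in $[0,1]$ and continuous on $(0,1]$, so all $w_k^{(j)}$ inherit these properties and the moment conditions hold trivially. Hence no genuine obstacle arises: the substantive content is entirely concentrated in the classical base case and in the single application of Corollary \ref{TruncationTransformation} per inductive step.
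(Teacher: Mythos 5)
Your proof is correct and follows essentially the same route as the paper: the paper likewise starts from the Jacobi ensemble \eqref{Jacobi-ensemble} of $T_1$ (valid since $m_1\geq 2n+\nu_1$), rewrites it as the polynomial ensemble \eqref{Jacobiensemble}--\eqref{defwk1}, and then iterates Corollary \ref{TruncationTransformation} once per additional factor $T_j$, exactly as in your induction. Your extra check of the moment hypothesis (which the paper leaves implicit) is a welcome refinement, though note that boundedness itself is not preserved under the Mellin convolution when $\nu_j=0$ (a logarithmic singularity at the origin can appear); what is preserved, and suffices, is an at most logarithmic growth near $0$ together with support in $[0,1]$.
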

The weight functions $w_k^{(r)}$ are obtained as an $(r-1)$-fold Mellin convolution of the
``Beta distribution". The function $w_k^{(1)}$ can be written as a Meijer G-function,
\begin{equation} \label{wk1MeijerG}
w_k^{(1)}(x) = c_{1}  \, \MeijerG{1}{0}{1}{1}{m_1-2n +k}{\nu_1 +k-1}{x}
	\end{equation}
with $c_{1} = \Gamma(m_1-2n-\nu_1+1)$. Since the 
class of Meijer G-functions is closed under the Mellin convolution, see \cite[formula (5)]{BeSz},
we find from \eqref{defwkr} and \eqref{wk1MeijerG},
\begin{align} \nonumber 
	w_k^{(r)}(y) & = c_{r} \MeijerG{r}{0}{r}{r}{m_r-n, \ldots, m_2-n, \ldots, m_1-2n+k}{\nu_r, \nu_{r-1}, \ldots,\nu_2, \nu_1 + k-1}{y} \\
		& = \frac{c_r}{2\pi i} \int_{C} 
			\frac{\Gamma(\nu_1+k-1+s) \prod_{j=2}^r \Gamma(\nu_j + s)}{\Gamma(m_1-2n+k+s) \prod_{j=2}^r \Gamma(m_j-n+s) } y^{-s} ds,
			\quad 0 < y < 1,
			\label{wkr-integral}
			\end{align}
Here the contour $C$  is a positively oriented curve in the complex $s$-plane that starts and ends at $-\infty$ and 
encircles the negative real axis. The constant $c_r$  in \eqref{wkr-integral} is
\begin{equation} 
c_{r} = \Gamma(m_1-2n-\nu_1+1) \prod_{j=2}^r \Gamma(m_j-n-\nu_j).
	\end{equation}

It can be checked from \eqref{wkr-integral} that the linear span of $w_1^{(r)}, \ldots, w_n^{(r)}$ consists of all functions of the form
\begin{equation} \label{wkr-span} 
	\frac{1}{2\pi i} \int_{C} \frac{q(s) \prod_{j=1}^r \Gamma(s+\nu_j)}{\prod_{j=1}^r \Gamma(s+m_j-n)}  y^{-s} ds, \qquad
	0 < y < 1, 
	\end{equation}
where $q(s)$ is a polynomial of degree smaller than $n$. Remarkably enough, this space does 
not depend on the ordering
of the parameters $m_1, \ldots, m_r$ and neither on the ordering of the parameters $\nu_1, \ldots, \nu_r$. 

The $k$-point correlation function of the joint probability density \eqref{wkr-ensemble} satisfies a determinantal point process on $[0,1]$. In the center of such a determinantal point process stands a correlation kernel that can always be written as
\begin{equation} \label{Knsum} 
	K_n(x,y) = \sum_{k=0}^{n-1} P_k(x) Q_k(y).
	\end{equation}
For a polynomial ensemble, the function $P_k$ is a polynomial of degree $k$ and $Q_k$ is in the linear span of $w_1^{(r)}, \ldots, w_n^{(r)}$
satisfying the biorthogonality 
\begin{equation} \label{PjQk-biorthogonal} 
	\int_0^1 P_j(x) Q_k(x) dx = \delta_{j,k},
	\end{equation}
see e.g.~\cite{Bor}.
As in \cite{KuZh} we find integral representations for the biorthogonal functions
$P_k$ and $Q_k$ and a double integral formula for $K_n$.
In what follows we use the Pochhammer symbol
\begin{equation}
(a)_k = a (a+1) \cdots (a+k-1) = \frac{\Gamma(a+k)}{\Gamma(a)}.
\end{equation}

\begin{proposition} \label{PkQkintegrals}
For every $k=0, \ldots, n-1$, we have
\begin{align} 
 P_k(x) & = \frac{1}{2\pi i} \oint_{\Sigma_k} 
	\frac{1}{(t-k)_{k+1}} \prod_{j=1}^r  \frac{\Gamma(t+1+ m_j-n)}{\Gamma(t+1+\nu_j)} \,  x^t \, dt \nonumber\\
	&=G_{r+1,r+1}^{0,r+1} \left( \left. {k+1,n-m_1,\ldots,n-m_r \atop 0,-\nu_1, \ldots,-\nu_r} \right|x\right) ,\label{defPk}
\end{align}
where $\Sigma_k$ is a closed contour encircling the interval $[0,k]$ once in positive direction
and is not encircling any pole of the integrand in \eqref{defPk} that is outside $[0,k]$. Moreover we have
\begin{align}
 Q_k(y) & =
	 \begin{cases} \ds
	\frac{1}{2\pi i} \int_{C} (s-k)_k \prod_{j=1}^r \frac{\Gamma(s+\nu_j)}{\Gamma(s+m_j-n)} \, y^{-s} ds,
		& 0 < y < 1, \\
		0, & \text{elsewhere} \end{cases}\nonumber\\
		&= G_{r+1,r+1}^{r+1,0} \left( \left. {-k,m_1-n,\ldots,m_r -n\atop 0,\nu_1, \ldots,\nu_r} \right|y\right) \label{defQk}
\end{align}
with the same contour $C$ as in \eqref{wkr-integral}.

The kernel $K_n$ from \eqref{Knsum} is
\begin{align} 
	K_n(x,y) &= 
\frac{1}{(2\pi i)^2}
		\int_{C} ds \oint_{\Sigma_n} dt \prod_{j=0}^r \frac{\Gamma(s+1+\nu_j) \Gamma(t+1+m_j-n)}{ \Gamma(t+1+\nu_j) \Gamma(s+1+m_j-n)}   
		\frac{x^t y^{-s-1}}{s-t} \nonumber\\
		&= -\int_0^1 G_{r+1,r+1}^{0,r+1} \left( \left. {n-m_0,\ldots,n-m_r \atop -\nu_0, \ldots,-\nu_r} \right|ux\right)\nonumber\\
		&\qquad \qquad \qquad \times G_{r+1,r+1}^{r+1,0} \left( \left. {m_0-n,\ldots,m_r -n\atop \nu_0, \ldots,\nu_r} \right|uy\right)du\label{Knintegral} 
\end{align}
which is valid if $\Sigma_n$ and $C$ do not intersect. In \eqref{Knintegral} it is understood that $m_0 = \nu_0 =0$.
\end{proposition}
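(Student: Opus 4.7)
The plan is to verify the formulas for $P_k$ and $Q_k$ first, establish the biorthogonality via Mellin transforms, and then derive the double contour and Meijer-G representations of $K_n$ via a closed-form summation identity together with the auxiliary expansion $1/(s-t)=-\int_0^1 u^{t-s-1}\,du$.

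For $P_k$, I would check that the only poles of the $t$-integrand inside $\Sigma_k$ are the simple poles at $t=0,1,\ldots,k$ coming from $1/(t-k)_{k+1}$; since $m_j\geq n$ the Gamma poles of $\Gamma(t+1+m_j-n)$ lie at $t\leq -1$, outside $\Sigma_k$. Summing residues yields a polynomial of degree exactly $k$ with coefficients
\begin{equation*}
c_\ell^{(k)}=\frac{(-1)^{k-\ell}}{\ell!\,(k-\ell)!}\prod_{i=1}^{r}\frac{\Gamma(\ell+1+m_i-n)}{\Gamma(\ell+1+\nu_i)},\qquad \ell=0,1,\ldots,k,
\end{equation*}
and the Meijer-G form \eqref{defPk} follows from \eqref{MeijerG-def} via the substitution $s\mapsto -t$. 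For $Q_k$, the prefactor $(s-k)_k$ is a polynomial in $s$ of degree $k<n$, which places $Q_k$ in the span \eqref{wkr-span} and shows it is supported on $(0,1)$; the Meijer-G form \eqref{defQk} is obtained by parameter matching together with the standard $G$-function argument-shift identity.

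For biorthogonality I would use Mellin transforms. Since the representation of $Q_k$ is already in Mellin--Barnes form, $\widehat{Q_k}(s)=(s-k)_k\prod_{i=1}^r \Gamma(s+\nu_i)/\Gamma(s+m_i-n)$, and therefore $\int_0^1 y^\ell Q_k(y)\,dy=\widehat{Q_k}(\ell+1)=(\ell+1-k)_k\prod_i\Gamma(\ell+1+\nu_i)/\Gamma(\ell+1+m_i-n)$. Multiplying the coefficient $c_\ell^{(j)}$ of $x^\ell$ in $P_j$ by this moment, the Gamma products in the two factors cancel exactly, leaving
\begin{equation*}
\int_0^1 P_j(x)Q_k(x)\,dx=\sum_{\ell=0}^{j}\frac{(-1)^{j-\ell}(\ell+1-k)_k}{\ell!\,(j-\ell)!}.
\end{equation*}
The terms with $\ell<k$ vanish because $(\ell+1-k)_k=0$ there, and the substitution $m=\ell-k$ reduces the surviving sum to $\frac{(-1)^{j-k}}{(j-k)!}(1-1)^{j-k}=\delta_{j,k}$.

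For the kernel, I would enlarge each $\Sigma_k$ to the common contour $\Sigma_n$ (admissible since no new poles enter) and interchange the finite sum with both integrals. This reduces the problem to the closed form
\begin{equation*}
S_n(s,t):=\sum_{k=0}^{n-1}\frac{(s-k)_k}{(t-k)_{k+1}}=\frac{1}{s-t-1}\left(\frac{(s-n)_n}{(t-n+1)_n}-1\right),
\end{equation*}
which I would prove by expanding $1/(t-k)_{k+1}$ in partial fractions in $t$, swapping the order of summation over $k$ and $\ell$, and applying the rational-function identity $\sum_\ell (-1)^\ell\binom{N}{\ell}/(z-\ell)=(-1)^N N!/\prod_{i=0}^N(z-i)$ first at $z=t$ and then at $z=s-1$. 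Substituting back, the $-1/(s-t-1)$ piece has a $t$-integrand with no poles inside $\Sigma_n$ and therefore vanishes, while the $(s-n)_n/(t-n+1)_n$ piece combines with the remaining Gamma ratios to realise the $j=0$ factor of the target integrand under the convention $m_0=\nu_0=0$; a one-unit shift of the $s$-contour $C\mapsto C+1$, crossing no pole in the pole-free strip $0<\operatorname{Re}(s)<1$, then produces \eqref{Knintegral}. The Meijer-G form finally follows by writing $1/(s-t)=-\int_0^1 u^{t-s-1}\,du$ (valid since $\operatorname{Re}(t)>\operatorname{Re}(s)$ along the double contour) and interchanging the $u$-integration with the $s,t$-integrations: this splits the integrand into a product of Mellin--Barnes integrals in $ux$ and $uy$ that are precisely the Meijer G-functions stated in \eqref{Knintegral}. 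The main obstacle is the closed-form evaluation of $S_n(s,t)$ together with the bookkeeping required to justify that no pole is crossed in the contour manipulations.
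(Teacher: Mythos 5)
Your proposal is correct and follows essentially the same route as the paper: residue calculus to show $P_k$ is a polynomial of degree $k$, membership of $Q_k$ in the span via $q(s)=(s-k)_k$, the Mellin moments of $Q_k$ to cancel the Gamma factors in the biorthogonality check, a closed-form evaluation of $\sum_{k=0}^{n-1}(s-k)_k/(t-k)_{k+1}$ (your identity is exactly the paper's telescoping identity after the shift $s\mapsto s+1$), vanishing of the residual term because its $t$-integrand has no poles inside $\Sigma_n$, and the representation $1/(s-t)=-\int_0^1 u^{t-s-1}\,du$ to pass to the Meijer G-form. The only deviations are cosmetic: the paper verifies biorthogonality by reducing to the single contour integral $\frac{1}{2\pi i}\oint_{\Sigma_l}\Gamma(t-l)/\Gamma(t+1-k)\,dt$ and evaluating it via Cauchy's theorem and the residue at infinity instead of your explicit binomial sum, and it proves the summation identity by induction rather than by partial fractions.
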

The proposition is proved in section \ref{sec:proof3}.

\subsection{Hard edge scaling limits}

Based on the double integral representation \eqref{Knintegral} we analyze some scaling limits of the correlation kernel as $n \to \infty$.
In a forthcoming publication we show that the usual sine kernel limit can be found in the bulk and the Airy kernel limit at the soft edges. These results will be reported elsewhere, see also \cite{LiWaZh}.

Here we want to look at a more straightforward scaling limit, namely the hard edge scaling at the origin in the following situation.
Taking $n \to \infty$, we simultaneously have to let $m_j \to \infty$ for $j=1, \ldots, r$,
since $m_1 \geq 2n + \nu_1$ and $m_j \geq n + \nu_j + 1$ for $j \geq 2$. 
We keep $\nu_j$ fixed for every $j=1, \ldots, r$.
We choose a subset $J$ of indices
\begin{equation} \label{Jset}
J = \{j_1, \ldots, j_q \} \subset \{2, \ldots, r\}, \qquad 
	\text{with } 0 \leq q = |J|  < r
\end{equation}
and integers $\mu_1, \ldots, \mu_q$ with $ \mu_k \geq \nu_{j_k} + 1$,
and we assume
\begin{align*}
	m_j - n & \to \infty && \text{ for } j \in \{1, \ldots, r\} \setminus J,  \\
	m_{j_k} - n &= \mu_k \text{ is constant} && \text{ for } j = j_k \in J. 
	\end{align*}
This leads to our final result, which we also prove in section \ref{sec:proof3}.
\begin{theorem} \label{Knlimit}
In the above setting,  we put 
\begin{equation} \label{defcn} 
	c_n = n \prod_{j \not\in J}  (m_j-n). 
	\end{equation}
Then the kernels $K_n$ from \eqref{Knintegral} have the following hard edge scaling limit,
\begin{align} \nonumber 
	\lim_{n \to \infty} & \frac{1}{c_n} K_n \left( \frac{x}{c_n}, \frac{y}{c_n} \right) \\
	& = 
	\frac{1}{(2\pi i)^2} \int_{-\frac{1}{2}-i\infty}^{-\frac{1}{2} + i \infty} ds \int_{\Sigma} dt
		\prod_{j=0}^{r}  \frac{\Gamma(s+1+\nu_j)}{\Gamma(t+1+\nu_j)} \frac{\sin \pi s}{\sin \pi t} 
		\prod_{k=1}^q \frac{\Gamma(t+1+\mu_k)}{\Gamma(s+1+\mu_k)}
			\frac{x^t y^{-s-1}}{s-t} \nonumber \\
	& =  -\int_0^1 G_{q,r+1}^{1,q} \left( \left. {-\mu_{1},\ldots,-\mu_q \atop 0;-\nu_1, \ldots,-\nu_r} \right|ux\right)  
	G_{q,r+1}^{r,0} \left( \left. {\mu_{1},\ldots,\mu_q\atop \nu_1, \ldots,\nu_r;0} \right|uy\right)du, \label{scalinglimit}
\end{align} 
		where $\Sigma$ is a contour around the positive real axis in the half-plane $\Re t > - \frac{1}{2}$.
\end{theorem}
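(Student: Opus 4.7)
The strategy is to perform the asymptotic analysis directly on the double contour integral \eqref{Knintegral}. With $m_0=\nu_0=0$ the $j=0$ factor equals $\frac{(s-n+1)_n}{(t-n+1)_n}$, and this is the only factor whose $n$-dependence enters inside Gamma-arguments rather than only through constant parameters. The first step is to rewrite it using the reflection formula $\Gamma(z)\Gamma(1-z)=\pi/\sin(\pi z)$ as
\[
\frac{(s-n+1)_n}{(t-n+1)_n} \;=\; \frac{\Gamma(s+1)\,\Gamma(n-s)\,\sin(\pi s)}{\Gamma(t+1)\,\Gamma(n-t)\,\sin(\pi t)}.
\]
After this rewriting, the would-be singularities at $t=n, n+1, \ldots$ cancel, because the zeros of $\sin(\pi t)$ there are compensated by the poles of $\Gamma(n-t)$; the $t$-integrand is thus meromorphic in $\Re t > -\tfrac{1}{2}$ with poles only at the non-negative integers. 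Consequently, for all large $n$ the closed contour $\Sigma_n$ can be deformed without crossing a pole to a fixed Hankel-type contour $\Sigma$ encircling $[0,\infty)$ in the strip $\Re t > -\tfrac{1}{2}$, and $C$ can be deformed to the vertical line $\Re s = -\tfrac{1}{2}$. The $\sin(\pi s)$ and $1/\sin(\pi t)$ factors will later provide the decay needed along the tails of these now-unbounded contours.

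Next, the substitutions $x\mapsto x/c_n$, $y\mapsto y/c_n$ produce an overall factor $c_n^{s-t}$ after cancelling the prefactor $1/c_n$. Stirling's formula applied to the Gamma ratios carrying an unbounded shift gives, uniformly on compact subsets of the contours,
\[
\frac{\Gamma(n-s)}{\Gamma(n-t)} = n^{t-s}\bigl(1+O(1/n)\bigr), \qquad \frac{\Gamma(t+1+m_j-n)}{\Gamma(s+1+m_j-n)} = (m_j-n)^{t-s}\bigl(1+O(1/(m_j-n))\bigr)
\]
for $j\notin J$, while for $j\in J$ the corresponding ratio is $n$-independent. The product of these leading terms equals $c_n^{t-s}$, which cancels the rescaling factor exactly, and the integrand converges pointwise to the one in the first equality of \eqref{scalinglimit}. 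The step I expect to be most delicate is upgrading this pointwise convergence to convergence of the double integral. This requires an $n$-uniform integrable majorant on the fixed contours: Stirling controls the Gamma ratios by polynomial factors in $|s|$ and $|t|$, the $\sin(\pi s)$ growth on $\Re s = -\tfrac{1}{2}$ is tamed via $\Gamma(s+1)\sin(\pi s) = -\pi/\Gamma(-s)$ (whose reciprocal Gamma decays super-polynomially in $|\Im s|$), and $\Sigma$ is chosen with tails kept at bounded distance from the real axis so that $1/\sin(\pi t)$ remains bounded; the factor $1/(s-t)$ is harmless since the two contours are separated. Dominated convergence then yields the first equality in \eqref{scalinglimit}.

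Finally, the Meijer-G representation in the second line of \eqref{scalinglimit} is obtained via the standard device $\frac{1}{s-t} = -\int_0^1 u^{t-s-1}\,du$, valid because $\Re(t-s) > 0$ on the chosen contours. Inserting this representation and applying Fubini to exchange the order of integration decouples the double integral into a product of single Mellin-Barnes integrals in $uy$ and $ux$. Using $\Gamma(z+1)\sin(\pi z) = -\pi/\Gamma(-z)$ to convert the $\sin$ factors into Gamma ratios and matching the parameters against the defining representation \eqref{MeijerG-def} identifies the $s$-integral with the $G^{r,0}_{q,r+1}$ factor and the $t$-integral with the $G^{1,q}_{q,r+1}$ factor appearing in \eqref{scalinglimit}; the overall minus sign arises from $\int_0^1 u^{t-s-1}\,du = -1/(s-t)$.
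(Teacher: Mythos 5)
Your overall route is the same as the paper's: rewrite the $j=0$ ratio $\Gamma(t+1-n)/\Gamma(s+1-n)$ via the reflection formula, apply Stirling-type asymptotics $\Gamma(n-s)/\Gamma(n-t)=n^{t-s}(1+O(1/n))$ and $\Gamma(t+1+m_j-n)/\Gamma(s+1+m_j-n)=(m_j-n)^{t-s}(1+O(1/(m_j-n)))$ for $j\notin J$ so that the prefactor $c_n^{s-t}$ cancels exactly, deform $\Sigma_n$ to $\Sigma$ and $C$ to the line $\Re s=-\tfrac12$, pass to the limit under the integral, and finally obtain the Meijer G-form from $\tfrac{1}{s-t}=-\int_0^1 u^{t-s-1}du$, Fubini, and the reflection formula. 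This is precisely the paper's argument (its formula \eqref{Gammaratiolimit} packages your ``reflection plus Stirling'' step into one statement), and your identification of the Meijer G-factors is correct.

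However, there is a concrete error in the step you yourself single out as the most delicate one. You claim that the growth of $\sin(\pi s)$ on $\Re s=-\tfrac12$ is tamed because $\Gamma(s+1)\sin(\pi s)=-\pi/\Gamma(-s)$ and ``the reciprocal Gamma decays super-polynomially in $|\Im s|$.'' That is false: on a vertical line one has $|\Gamma(\sigma+iy)|\sim\sqrt{2\pi}\,|y|^{\sigma-1/2}e^{-\pi|y|/2}$, so $1/|\Gamma(-s)|$ \emph{grows} like $e^{\pi|\Im s|/2}$ (reciprocal Gamma decays along the positive real direction, not along vertical lines). Consequently the identity only reduces the growth of $\sin(\pi s)$ from $e^{\pi|\Im s|}$ to $e^{\pi|\Im s|/2}$; the actual decay must come from the surplus of decaying Gamma factors. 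Counting exponential rates of the limiting $s$-integrand in \eqref{scalinglimit}: the $r+1$ factors $\Gamma(s+1+\nu_j)$ contribute $e^{-(r+1)\pi|\Im s|/2}$, the factor $\sin(\pi s)$ contributes $e^{+\pi|\Im s|}$, and the $q$ factors $1/\Gamma(s+1+\mu_k)$ contribute $e^{+q\pi|\Im s|/2}$, for a net rate $\tfrac{\pi}{2}(q+1-r)\le 0$. This is strictly negative only when $q<r-1$; in that case your dominated-convergence argument can be repaired by replacing the false majorant with this count. In the extreme case $q=r-1$ the exponential factors cancel identically and only polynomial factors $|\Im s|^{\sum_j\nu_j-\sum_k\mu_k}$ remain, which need not be integrable (e.g.\ $r=2$, $q=1$, $\mu_1=\nu_2+1$, $\nu_1\ge 1$), so the uniform integrable majorant you assert does not exist in general and the interchange of limit and integration (and the Fubini step) requires a genuinely different justification there, such as exploiting cancellation from the oscillating factors or an alternative contour choice.
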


The kernel \eqref{scalinglimit} reduces to the Meijer G-kernel described in \cite{KuZh} in case $q=0$. These kernels appeared before for limits of products of Ginibre matrices \cite{KuZh}, products with inverses of Ginibre matrices \cite{For14},
for biorthogonal ensembles \cite{KuSt}, and also as limits for Cauchy two matrix models \cite{BeBo,BeGeSz,ForKi}.
According to Theorem \ref{Knlimit} we obtain the same limits for products
of truncated unitary matrices provided that the dimensions $m_j$ of the
underlying unitary matrices become large compared to $n$, in the sense that
$m_j - n \to +\infty$ for every $j$.

The kernels  \eqref{scalinglimit} are new for $q \geq 1$, and these
are finite rank perturbation of the Meijer G-kernels from \cite{KuZh}. To see this,
we recall that $\mu_k \geq \nu_{j_k}+1$. Let us assume, for notational simplicity,
that $j_k = r-q+k$ in \eqref{Jset}. Then
\begin{equation} \label{defRt} 
	R(t) := \prod_{k=1}^q \frac{\Gamma(t+1+\mu_k)}{\Gamma(t+1+\nu_{r-q+k})}
	\end{equation}
is a polynomial of degree $\deg R = \sum_{k=1}^q (\mu_k - \nu_{r-q+k})$, and \eqref{scalinglimit}  can be written as
\begin{equation}
	\frac{1}{(2\pi i)^2} \int_{-\frac{1}{2}-i\infty}^{-\frac{1}{2} + i \infty} ds \int_{\Sigma} dt
		\prod_{j=0}^{r-q}  \frac{\Gamma(s+1+\nu_j)}{\Gamma(t+1+\nu_j)} \frac{\sin \pi s}{\sin \pi t} \frac{R(t)}{R(s)}
			\frac{x^t y^{-s-1}}{s-t}.
\end{equation}
This is indeed a finite rank perturbation of the Meijer G-kernel with
parameters $\nu_1, \ldots, \nu_{r-q}$, since $R$ is a polynomial.
In particular for $q = r- 1$, it is a finite rank modification of the hard edge Bessel kernel.
Such finite rank modifications were also obtained in \cite{DeFo} in a somewhat different context.

In the ensuing sections we prove our statements. 
We start in section \ref{sec:proofunitaryintegral} with the proof of Theorem \ref{GRunitaryintegral}
since it  is used in the first proof of Theorem \ref{propTruncation} that we present
in section \ref{sec:proof1}.
The second proof is in section \ref{sec:proof2}. This proof is a rather lenghty sequence
of matrix integral evaluations and we have broken it up into six steps.
The proofs of Proposition \ref{PkQkintegrals} and Theorem \ref{Knlimit} are shown in section~\ref{sec:proof3}.

\section{Proof of Theorem \ref{GRunitaryintegral}} \label{sec:proofunitaryintegral}

For the proof of Theorem \ref{GRunitaryintegral}, we need the Ingham-Siegel formula \cite{Ingham,Siegel}%,Fyo,KaSi}
\begin{equation} \label{Ingham-Siegel} 
	\int_{{\mathcal H}(n)} \exp[i \Tr HX] \det(H-zI_n)^{-n-p} dH = c \exp[iz \Tr X] \det X^p \, \theta(X)
	\end{equation}
with a normalization constant $c$ depending on $p$ and $n$, only, which can be fixed by the choice $X=I_n$ and $z=i$. This integral  is valid for Hermitian matrices $X\in{\mathcal H}(n)$ and $\Im z > 0$.
The integral is over the space $\mathcal H(n) =  \mathfrak{gl}(n)\slash\mathfrak{u}(n)$ of $n \times n$ Hermitian matrices $H$ with the flat Lebesgue measure,
\begin{equation} 
dH = \prod_{j=1}^n dH_{jj} \, \prod_{j < k} d \Re H_{jk} \, d\Im H_{jk}.
	\end{equation}
Here $\mathfrak{gl}(n)$ and $\mathfrak{u}(n)$ are the Lie algebras of the  general linear and the unitary group, respectively.
If $p$ is not an integer then we define via the spectral representation,
\begin{equation} 
\det(H-zI_n)^{-n-p} = \prod_{j=1}^n (h_j - z)^{-n-p},
	\end{equation}
where $h_1, \ldots, h_n$ are the real eigenvalues of the Hermitian matrix $H$ and $(h-z)^{-n-p}$
is defined in the complex $h$-plane with a  branch cut along $\{ z + iy \mid y \geq 0 \}$,
and with $h^{n+p} (h-z)^{-n-p} \to 1$ as $h \to +\infty$.

For $n=1$ the Ingham-Siegel formula \eqref{Ingham-Siegel}  reduces to 
\begin{equation} \label{Ingham-Siegel-nis1} 
	\int_{-\infty}^{\infty} \frac{e^{i xs}}{(s-z)^{1+p}} ds = 
\begin{cases} \ds \frac{2\pi i e^{\pi i p/2}}{\Gamma(p+1)} e^{izx} x^p,  & \text{ if } x \geq 0, \\ 0, & \text{ if } x < 0,
\end{cases} \end{equation} 
which is valid for $\Im z > 0$ and $p \geq 0$. It can be verified
by contour integration.

\begin{proof}[Proof of Theorem \ref{GRunitaryintegral}]
We consider the integral
\begin{equation} \label{integral-def} 
	J(A,B)=\int_{\mathcal U(n)}  \det \left(A-UBU^*\right)^{p} \theta(A-UBU^*) \, dU.
	\end{equation}
The determinant can be rewritten via the Ingham-Siegel formula \eqref{Ingham-Siegel} identifying $X = A - U B U^*$. We obtain
\begin{equation} \label{Kieburg1}	
	J(A,B)\propto e^{-iz \Tr (A-B)} \int_{\mathcal U(n)}  \int_{\mathcal H(n)} e^{i \Tr H(A-UBU^*)} \det(H-zI_n)^{-n-p} dH dU 
	\end{equation}
Both integrals are absolutely integrable  because the integral over $U$ is over a compact set with a continuous integrand and the integral over $H$ is bounded by $|\Delta(h)\prod_{j=1}^n (h_j-z)^{-n-p}/\Delta(a)|$ for $\sum_{j=1}^n h_j^2\to\infty$. Recall that $a_1,\ldots,a_n$ are the real, pairwise distinct eigenvalues of $A$. Hence we can interchange the order of integration.

The integral over the unitary group is 
evaluated with the HCIZ formula \eqref{HCIZ-formula}.  
Then \eqref{Kieburg1} is
up to a constant
\begin{equation} \label{Kieburg2} 
	J(A,B)\propto  e^{-iz\Tr(A-B)} \int_{\mathcal H(n)} 
	e^{i \Tr HA} \det(H-zI_n)^{-n-p} \frac{\det\left[ e^{-i h_j b_k}\right]}{\Delta(h) \Delta(b)} dH.
	\end{equation}
We write $H = V h V^*$ for the eigenvalue decomposition of $H$ where $V\in\U(n)$ and $h=\diag(h_1,\ldots,h_n)$. Moreover we use that
$dH\propto\Delta(h)^2 \, dV dh_1 \ldots dh_n$, see e.g.\ \cite{Dei}. Using this in \eqref{Kieburg2} leads to
\begin{align}  
	J(A,B)&\propto  \frac{e^{-iz\Tr(A-B)}}{\Delta(b)} \int_{\mathbb R^n} \int_{\mathcal U(n)} e^{i \Tr V h V^* A}  
	\prod_{j=1}^n (h_j - z)^{-n-p}\nonumber\\
	& \qquad \qquad \times\Delta(h) 
	\det\left[ e^{-i h_j b_k}\right]_{j,k=1}^n dV dh_1 \cdots dh_n \label{Kieburg3}
	\end{align}
The integral over $V \in \mathcal U(n)$ is again a HCIZ integral \eqref{HCIZ-formula},
\begin{align} 
	J(A,B)&\propto  \frac{e^{-iz\Tr(A-B)}}{\Delta(a) \Delta(b)} \int_{\mathbb R^n} \det \left[e^{i h_j a_k} \right]_{j,k=1}^n
	\prod_{j=1}^n (\lambda_j-z)^{-n-p} \nonumber\\
	&\qquad \qquad \times\det\left[e^{-i h_j b_k}\right]_{j,k=1}^n  dh_1 \cdots dh_n.\label{Kieburg4} 
	\end{align}
The factors in the product $\prod_{j=1}^n (\lambda_j-z)^{-n-p}$ can be pulled into either one of the
two determinants. Then, Andr\'eief's identity~\eqref{Andreief} can be applied to find
\begin{equation} \label{Kieburg5} 
	J(A,B)\propto  \frac{e^{-iz\Tr(A-B)}}{\Delta(a) \Delta(b)} 
	\det \left[ \int_{-\infty}^{\infty}   \frac{e^{i s (a_j-b_k)}}{(s - z)^{n+p}} 	ds \right]_{j,k=1}^n.
	\end{equation}
The integral in the determinant is of the form \eqref{Ingham-Siegel-nis1} which is 
up to a constant equal to $e^{iz(a_j-b_k)} (a_j-b_k)_+^{n+p-1}$.
The exponential factors $e^{iz(a_j-b_k)}$ inside the determinant cancel with those in front of the determinant. 
The resulting expression is
\begin{equation} \label{Kieburg6}
	 J(A,B)\propto  \frac{ \det\left[ (a_j-b_k)_+^{n+p-1} \right]}{\Delta(a) \Delta(b)},
	 \end{equation}
	which is up to a constant indeed the right hand side of \eqref{Kieburg-integral}.
	
Whenever $A - UBU^*$ is positive definite for all $U\in\U(n)$, the formula \eqref{Kieburg-integral} reduces to 
\begin{equation} \label{GrossRichards-integral} 
	\int_{\mathcal U(n)}  \det \left(A-UBU^*\right)^{p} dU
	= c_{n,p} \frac{\det \left[\left( a_j - b_k\right)^{p+n-1} \right]_{j,k=1}^n}{\Delta(a) \Delta(b)}.
	\end{equation}
This is equivalent to an integral given by Gross and Richards in \cite[formula (3.21)]{GrRi}, namely
\begin{equation} \label{GrossRichards-integral2} 
 \frac{\det \left[\left(1-s_j t_k\right)^{-a} \right]_{j,k=1}^n}{\Delta(s) \Delta(t)} = 
	\tilde{c}_{n,a} \int_{\mathcal U(n)}  \det \left(I_n-SUTU^*\right)^{-(a+n-1)} dU
	\end{equation}
	with $\tilde{c}_{n,a} = \prod_{j=0}^{n-1} (a)_j/j!$.
The formula \eqref{GrossRichards-integral2} is valid whenever $S$ and $T$ are Hermitian matrices
with eigenvalues $s_1, \ldots, s_n$ and $t_1, \ldots, t_n$, respectively, satisfying $|s_j t_k|< 1$ for all
$j,k=1, \ldots,n$. 
The formulas \eqref{GrossRichards-integral} and \eqref{GrossRichards-integral2} are related if we
take $-a = p+n-1$, $S = A^{-1}$ and $T=B$. The constants are related by $c_{n,p} = 1/\tilde{c}_{n,a}$,
and the formula \eqref{cnp} follows which completes the proof of Theorem \ref{GRunitaryintegral}.
 \hfill{$\Box$}
\end{proof}

\section{First proof of Theorem \ref{propTruncation}} \label{sec:proof1}

As already said, our first proof of Theorem \ref{propTruncation} only
works if $m \geq 2n +\nu$. 
In that case there is an explicit formula for the distribution
of a truncation $T$ of size $(n +\nu) \times n$, namely
\begin{equation} \label{truncated-distribution} 
	c \, \det(I _n- T^*T)^{m-2n-\nu} \theta(I_n-T^*T) \, dT 
\end{equation}
with $dT = \prod_{j=1}^{n+\nu} \prod_{k=1}^n d \Re T_{jk} \, d \Im T_{jk}$ the flat Lebesgue measure on the space of $(n +\nu) \times n$ rectangular complex matrices and $c$ 
is a constant. The function $\theta$ is  the
Heaviside step function of a matrix argument defined in \eqref{Heaviside-theta}.
The (unordered) eigenvalues $t_1, \ldots, t_n$ of $T^*T$ are in the interval $[0,1]$ and have the joint
probability density 
\begin{equation} \label{Jacobi-ensemble} 
	\frac{1}{Z_n} \prod_{j<k} (t_k-t_j)^2 \prod_{j=1}^n t_j^{\nu} (1-t_j)^{m-2n-\nu},
	\qquad 0 \leq t_1, \ldots, t_n \leq 1. 
	\end{equation}
This is an example of a Jacobi ensemble \cite{Col}.
When $m < 2n +\nu$, then $T^*T$ has the eigenvalue $1$ with multiplicity of at least $2n+\nu - m$ and
the density \eqref{truncated-distribution} is not valid anymore.

We follow the proof of Lemma 2.2 in \cite{KuSt} except that at a certain stage in
the proof the HCIZ integral \eqref{HCIZ-formula}  is replaced by the
integral \eqref{Kieburg-integral}. 

\begin{proof}[Proof of Theorem \ref{propTruncation} in the case $m \geq 2n+\nu$] \quad
As already discussed in Remark \ref{remarkln} we may restrict to the case $l = n$ without loss of generality.

% \paragraph{Step 1:} 
Consider a fixed square matrix $X$ of size $n \times n$ which is assumed to be invertible. The change of variables $T \mapsto Y = TX$ has the Jacobian, see e.g.\ \cite[Theorem 3.2]{Mat},
\begin{equation}
\det (X^* X)^{-(n+\nu)} = \prod_{k=1}^n x_k^{-n-\nu}.
\end{equation}
The distribution \eqref{truncated-distribution} on $T$ (where $T$ has size $(n+\nu) \times n$) 
then transforms into the distribution 
\begin{align} 
 & \propto \prod_{k=1}^n x_k^{-n-\nu} \, \det(I_n - (X^{-1})^* Y^* Y X^{-1})^{m-2n-\nu}\nonumber 
  \theta(I_n-(X^{-1})^* Y^* Y X^{-1}) \, dY\\
 & = \prod_{k=1}^n x_k^{-m+n} \, \det(X^* X -  Y^* Y)^{m-2n-\nu} \theta(X^*X - Y^* Y) \, dY. \label{Y-distribution}
	\end{align}

%\paragraph{Step 2:} 
In the next step we perform a singular value decomposition $Y = V \Sigma U$ with Jacobian \cite{EdRa}
\begin{equation}
dY \propto \left( \prod_{j=1}^n y_j^{\nu} \right)	\Delta(y)^2 dU dV dy_1 \ldots dy_n
\end{equation}
written in terms of the squared singular values $y_1, \ldots, y_n$ of $Y$. The measure $dU$ is the Haar measure on $\U(n)$ and $dV$ is the
invariant measure on $\U(n+\nu)\slash[\U^n(1)\times\U(\nu)]$. We use this fact in \eqref{Y-distribution} to perform the integration of $V$,
which only contributes to the constant.
This yields a probability measure on $\mathcal U(n) \times [0,\infty)^n$ proportional to
\begin{multline}
 \propto\left( \prod_{k=1}^n x_k^{-m+n} \right) \left( \prod_{j=1}^n y_j^{\nu} \right) \, 
	\det(X^* X - U^* \Sigma^2 U)^{m-2n-\nu} \\ 
	\times\theta(X^*X - U^*\Sigma^2 U)\Delta(y)^2 \, dU  dy_1 \cdots dy_n.  \label{Uy-distribution} 
\end{multline}
	
%\paragraph{Step 3:}
The integral over $U$ in \eqref{Uy-distribution} can be done with the help of the 
integral \eqref{Kieburg-integral} with $A = X^*X$, $B = \Sigma^2$, and $p = m -2n - \nu$.
This leads to the density for the squared singular values $y_1, \ldots, y_n$ of $Y$,
given those of $X$, which is proportional to \eqref{TXdensity}  and Theorem \ref{propTruncation}
follows  for $m \geq 2n +\nu$.  \hfill{$\Box$}
\end{proof}

\section{Second proof of Theorem \ref{propTruncation}} \label{sec:proof2}

We underline that the second approach to prove Theorem \ref{propTruncation} does not rely on the restriction $m\geq 2n+\nu$.
As before we denote the set of $n \times n$ unitary matrices and of $n \times n$ Hermitian matrices by $\mathcal U(n)$ and $\mathcal H(n)$, respectively. We also
use $\mathcal M(m,n)$ for the space of $m \times n$ complex matrices and abbreviate $\mathcal M(m) = \mathcal M(m,m)$.

Also in the second approach we assume that $l = n$ because it does not restrict generality,
see Remark \ref{remarkln}.
We assume $X$ to be a fixed $n \times n$ matrix with non-zero squared singular values.
%Throughout this section we  use $c$ to denote a positive constant that is independent of $X$, but it can depend on $n$, $m$ and $\nu$.

\subsection{Preliminaries}

Let $f$ be a symmetric function in $n$ variables. We extend $f$ to Hermitian matrices $A$ by defining
$f(A) = f(a_1, \ldots, a_n)$ if $a_1, \ldots, a_n$ are the eigenvalues of $A$. 
With a slight abuse of notation we also define $f(B)$ for  $(n+\nu) \times (n+\nu)$ matrices $B$ having
$\nu$ eigenvalues equal to $0$, by putting
$f(B) = f(b_1, \ldots, b_n)$ if $b_1, \ldots, b_n$ are the non-zero eigenvalues of $B$.

Then our aim is to prove that for all continuous symmetric functions $f$ on $[0,\infty)^n$,
we have
\begin{equation} \label{toprove1} \mathbb E \left[f(Y^*Y) \right] =  
	 \int_{[0,\infty)^n} f(y_1, \ldots, y_n) p_{X,Y}(x,y) dy_1 \ldots dy_n 
	\end{equation}
where, for a given $X$, $y \mapsto p_{X,Y}(x,y)$ denotes the density from \eqref{TXdensity}.
It will be enough to prove \eqref{toprove1} for symmetric polynomial functions $f$,
since the density $p_{X,Y}$ has a compact support and the symmetric polynomials
are then obviously uniformly dense in the set of all continuous symmetric functions.

Note that, by our definition of $f$ on matrices, we have $f(Y^* Y) = f(Y Y^*) = f(TX X^* T^*)$.
Since $T$ is the truncation of a Haar distributed unitary matrix $U$ of size $m \times m$, we have
\begin{align}
 \mathbb E \left[f(YY^*) \right]&=\int_{\mathcal U(m)} f \left(\begin{pmatrix} I_{n+\nu} & O_{n+\nu,m-n-\nu}\end{pmatrix}\widetilde{U}
		XX^* \widetilde{U}^*\begin{pmatrix} I_{n+\nu} \\ O_{m-n-\nu,n+\nu}\end{pmatrix}
			\right) dU \nonumber\\
			&\propto \int_{\mathcal M(m,n)} f \left(\begin{pmatrix} I_{n+\nu} & O_{n+\nu,m-n-\nu}\end{pmatrix}M
		XX^* M^*\begin{pmatrix} I_{n+\nu} \\ O_{m-n-\nu,n+\nu}\end{pmatrix}
			\right) \nonumber\\
			&\qquad \times\prod\limits_{1\leq j<k\leq n} \delta^{(2)}(\{M^*M\}_{jk})\prod_{j=1}^{n} \delta(\{M^*M\}_{jj}-1) dM, \label{EfYYstar}
\end{align}
where $dU$ is the normalized Haar measure on the unitary group $\U(m)$. The matrix $O_{p,q}$ is the zero matrix of  size $p \times q$. The complex $m\times n$ matrix  
\begin{equation}
\widetilde{U} = U \begin{pmatrix} I_n \\ O_{m-n,n} \end{pmatrix} 
\end{equation}
is an element in the Stiefel manifold $\U(m)\slash \U(m-n)$. The orthonormality of the columns of $\widetilde{U}$ can be enforced by $n^2$ Dirac delta functions (recall that $\delta^{(2)}$ is the one for complex numbers). In this way we integrate in
\eqref{EfYYstar} over the larger space $\mathcal M(m,n)$. See also the discussion in \cite{CK}.

The complex matrix $M$ can be partitioned into two blocks
\begin{equation}
 M=\begin{pmatrix} M_1 \\ M_2 \end{pmatrix}
\end{equation}
with $M_1$ an $(n+\nu)\times n$ complex matrix and $M_2$ an $(m-n-\nu)\times n$ complex matrix. Then we have to calculate
\begin{align}
 \mathbb E \left[f(YY^*) \right]&=
 	\int_{\mathcal M(n+\nu,m)} f \left(M_1 XX^*M_1^*
			\right) \nonumber \\
			& \qquad \qquad \times\prod\limits_{1\leq j<k\leq n+\nu} \delta^{(2)}(\{M^*M\}_{jk})\prod_{j=1}^{n+\nu} \delta(\{M^*M\}_{jj}-1) dM.  \label{EfYYstar.b}
\end{align}

\subsection{Proof of Theorem \ref{propTruncation}}

To establish \eqref{toprove1} we proceed in six steps.

\paragraph{Step 1: Matrix delta function}

In the first step we rewrite the Dirac delta functions in \eqref{EfYYstar} as  Fourier-Laplace transforms \cite{CK}
where 
\begin{eqnarray}
&&\hspace*{-1.5cm}\prod\limits_{1\leq j<k\leq n} \delta^{(2)}(\{M^*M\}_{jk})\prod_{j=1}^{n} \delta(\{M^*M\}_{jj}-1) \nonumber \\
 &&= \lim_{t\to0}\frac{1}{2^{n}\pi^{n^2}}\int_{\mathcal H(n)} \exp[\Tr (I_{n}-iK)( I_{n}-M^*M )-t \Tr K^2] dK \label{Dir-rep}
\end{eqnarray} 
with an integration over the space $\mathcal H(n)$ of Hermitian $n \times n$ matrices $K$.
For an integration over the whole group $\U(m)$, i.e. $m=n$, this integration is equal to the one in \cite[formula (13)]{BM}. The shift of the matrix $K$ by $iI_n$ ensures the absolute integrability of the integral over $M$ and the Gaussian incorporating the limit in the auxiliary variable $t$ guarantees the absolute integrability in $K$. Note that the limit has to be understood as a limit in the weak topology meaning that we have to integrate over $M$, first, and, then, take the limit $t\to0$.
Hence the integral~\eqref{EfYYstar.b} reads
\begin{multline}
 \mathbb E \left[f(YY^*) \right] \propto \lim_{t\to0}\int_{\mathcal M(m,n)} f \left(M_1 XX^*M_1^*
			\right)\label{EfYYstar.c}\\
			\times\int_{\mathcal H(n)} \exp[\Tr (I_{n}-iK)(I_{n}-M_1^*M_1-M_2^*M_2 )-t \Tr K^2] dKdM
\end{multline}
up to a constant only depending on $m$, $n$ and $\nu$.

Both integrals are absolutely integrable. Therefore we can interchange the integrals. The integral over the matrix $M_2$  is a Gaussian integral yielding
\begin{multline}
 \mathbb E \left[f(YY^*) \right] \propto 
 \lim_{t\to0}\int_{\mathcal H(n)} \int_{\mathcal M(n+\nu,n)} f \left(M_1 XX^*M_1^*
			\right)\det(I_{n}-iK)^{-m+n+\nu} \\
	 \times\exp[\Tr (I_{n}-iK)(I_{n}-M_1^*M_1)-t\Tr K^2] dM_1dK. \label{EfYYstar.d}
\end{multline}
Finally we take $t\to0$. This can be done because $f$ is polynomial. In the case $f=1$ the integral over $M_1$ yields an additional factor $\det(I_{n+\nu}-iK)^{-n}$ ensuring the absolute integrability also at $t=0$. Since the function $f$ is polynomial we know 
that $f \left(M_1 XX^*M_1^* \right)$ is a polynomial in the matrix entries of $M_1$. Thus the Gaussian integral over $M_1$ yields a polynomial in $(I_{n+\nu}-iK)^{-1}$ with the lowest order to be $\det(I_{n+\nu}-iK)^{-n}$. Therefore the integrand of the $K$-integral after integrating over $M_1$, first, is indeed absolutely integrable also at $t=0$ such that
\begin{multline}
 \mathbb E \left[f(YY^*) \right] \propto \int_{\mathcal H(n)} \int_{\mathcal M(n+\nu,n)} f \left(M_1 XX^*M_1^* 	\right)\det(I_{n}-iK)^{-m+n+\nu} \\
		\times\exp[\Tr (I_{n}-iK)(I_{n}-M_1^*M_1)] dM_1dK. \label{EfYYstar.e}
\end{multline}
We underline that now the order of the integrals is crucial and cannot be interchanged.

\paragraph{Step 2: Changes of variable}

The change of variables $M_1 \mapsto S =  M_1X$ has the Jacobian
\begin{equation}
\det(X^*X)^{-n - \nu} = \prod_{j=1}^n x_j^{-n-\nu} 
\end{equation}
and the change of variables $K \mapsto \tilde{K} = X^{-1} K (X^{-1})^*$ on the
space of Hermitian matrices yields
\begin{equation}
\det(X^*X)^n = \prod_{j=1}^n x_j^n.
\end{equation}
Applying this to \eqref{EfYYstar.e} (and dropping the tilde from $\tilde{K}$), 
we obtain
\begin{align}
\mathbb E \left[f(YY^*) \right]&\propto \prod_{j=1}^n x_j^{-\nu} \int_{\mathcal H(n)} dK  \det(I_n-iX K X^*)^{-(m-n-\nu)} \nonumber\\
	& \qquad \times\int_{\mathcal M(n+\nu,n)} 	dS f(SS^*) 		e^{\Tr(I_n-iX K X^*)(I_n-(X^{-1})^* S^*S X^{-1})} \nonumber\\
	&= \prod_{j=1}^n x_j^{-m+n} \int_{\mathcal H(n)} dK \det( (X^*X)^{-1}-iK)^{-(m-n-\nu)} \nonumber\\
	& \qquad \times\int_{\mathcal M(n+\nu,n)} dS f(SS^*) 	e^{\Tr((X^*X)^{-1} -i K)  (X^*X-S^*S)} \nonumber\\
	&=  \prod_{j=1}^n x_j^{-m+n} \int_{\mathcal H(n)} \Psi(K + i (X^*X)^{-1} - i I_n) dK,   \label{intFU5}
\end{align}
where $\Psi$ is defined by
\begin{equation} \label{def:Psi} 
	\Psi(K) := \det(I_n-iK)^{-(m - n - \nu)} \int_{\mathcal M(n+\nu,n)}  f(S S^*) 
				e^{\Tr(I_n-iK)(X^*X-S^*S)} dS.
	\end{equation}

\paragraph{Step 3: Shift in the matrix $K$}

In this step we prove the following Lemma.

\begin{lemma} \label{lemma:shift}
For any matrix $A$ with $\Im A:=(A-A^*)/2i > -I_n$ (which means that  $I_n + \Im A$ is a positive definite Hermitian
matrix), we have
\begin{equation} \label{Psi-shift} 
	\int_{\mathcal H(n)} \Psi(K + A) dK = \int_{\mathcal H(n)} \Psi(K) dK. 
	\end{equation}
\end{lemma}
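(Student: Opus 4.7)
The plan is to reduce to a purely imaginary matrix shift and then apply a tube-domain Cauchy theorem. Any $A$ with $\Im A > -I_n$ decomposes uniquely as $A = A_1 + iA_2$ with $A_1, A_2 \in \mathcal{H}(n)$ and $A_2 > -I_n$; since translation by the Hermitian matrix $A_1$ preserves the Lebesgue measure $dK$ on $\mathcal{H}(n)$, it suffices to establish \eqref{Psi-shift} for $A = iB$ with $B \in \mathcal{H}(n)$, $B > -I_n$.

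The key observation is that $\Psi$ extends to a holomorphic function on the tube domain $T = \{K \in \mathcal{M}(n) : I_n + \Im K > 0\}$, where $\Im K := (K - K^*)/(2i)$. Indeed, the Hermitian part of $I_n - iK$ equals $I_n + \Im K$, so the Gaussian $e^{-\Tr(I_n - iK) S^*S}$ is absolutely integrable in $S$ exactly on $T$. Because $f(SS^*)$ is polynomial, carrying out this Gaussian explicitly expresses $\Psi(K)$ as $\det(I_n - iK)^{-m} \cdot e^{\Tr(I_n - iK) X^*X} \cdot Q((I_n - iK)^{-1})$, where $Q$ is a fixed polynomial depending only on $f$. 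This representation is manifestly holomorphic on $T$.

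For fixed $B_0 \in \mathcal{H}(n)$ with $B_0 > -I_n$, one verifies uniform integrability along the slice $\mathcal{H}(n) + iB_0$: the exponential factor has constant modulus $e^{\Tr(I_n + B_0) X^*X}$ along the slice, and $|\det(I_n - iK)^{-m}|$ decays like $\|K_0\|^{-mn}$ as $\|K_0\| \to \infty$ with $K_0 \in \mathcal{H}(n)$; since $m \geq n + \nu + 1 > n$, this decay dominates both $|Q|$ (of bounded degree) and the radial volume growth $\|K_0\|^{n^2-1}$ of $\mathcal{H}(n)$. Given holomorphy on $T$ together with slice-integrability that is locally uniform on $\{B : B > -I_n\}$, the standard tube-domain Cauchy theorem (applied iteratively over the $n^2$ real entries of $K$, or directly by Cauchy's theorem in one variable plus Fubini) implies that $s \mapsto \int_{\mathcal{H}(n)} \Psi(K + isB) dK$ is constant on $s \in [0,1]$. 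Comparing $s=0$ and $s=1$ yields \eqref{Psi-shift} for $A = iB$, and the reduction above closes the proof.

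The principal obstacle is the integrability bookkeeping that underlies the contour shift, since $\Psi$ is itself defined by an $S$-integral and the decay in $K$ is delicate: one must track the degree of the polynomial $Q$ produced by the $S$-Gaussian and verify that $\det(I_n - iK)^{-m}$ outweighs it, uniformly for $B$ in compact subsets of $\{B : B > -I_n\}$. The bound $m \geq n + \nu + 1 > n$ makes this comfortable, but the verification is where the polynomial nature of $f$ and the exponent $m$ must be used explicitly; once the decay is in hand, the tube-domain Cauchy step is routine.
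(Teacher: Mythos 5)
Your reduction to a purely imaginary shift and the idea that $\Psi$ is holomorphic on the tube $\{K : I_n + \Im K > 0\}$ are both sound, and your Wick-expansion representation of $\Psi$ is correct. The genuine gap is in the decay estimate on which your tube-domain Cauchy/Fubini step rests. The claim that $|\det(I_n - iK)|^{-m}$ decays like $\|K_0\|^{-mn}$ as $\|K_0\| \to \infty$ along a slice $K = K_0 + iB_0$ is false: writing $P = I_n + B_0 > 0$, one has $|\det(P - iK_0)| = \det P \, \prod_{j=1}^n (1+\mu_j^2)^{1/2}$, where $\mu_1, \ldots, \mu_n$ are the eigenvalues of the Hermitian matrix $P^{-1/2} K_0 P^{-1/2}$; in the directions where a single eigenvalue diverges and the others stay bounded, the decay is only $\|K_0\|^{-m}$. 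The integrability bookkeeping must therefore be done in eigenvalue coordinates, where $dK_0 \propto \Delta(\mu)^2 d\mu \, dV$: the integrand behaves in each eigenvalue like $|\mu_j|^{2(n-1)-m}$ at infinity, so the slice integral is absolutely convergent only when $m \geq 2n$. But the hypothesis of Theorem \ref{propTruncation} is only $m \geq n+\nu+1$, which is smaller than $2n$ whenever $\nu < n-1$ — and this is exactly the regime this lemma must cover, since $m \geq 2n+\nu$ is already handled by the first proof in section \ref{sec:proof1}. So the uniform slice-integrability you assert, and with it your "routine" tube-Cauchy step, fails precisely where the lemma is needed.

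The paper circumvents this with two ingredients your proposal is missing. First, it reduces to a \emph{diagonal} matrix $B$ using the identity $\Psi_X(UKU^*) = \Psi_{U^*X}(K)$ together with the unitary invariance of $dK$; then the shift $K \mapsto K + iB$ moves only the $n$ diagonal entries $K_{jj}$, so only $n$ one-variable contour deformations are required rather than $n^2$. Second, instead of absolute integrability over the whole slice, it establishes the pointwise bound $|\Psi(K+itB)| \leq C / \max_{j} |K_{jj}|^{p}$ with $p = m-n-\nu \geq 1$, uniform in $t \in [0,1]$ and in the off-diagonal entries, via a Rayleigh-quotient argument applied to $(I_n + tB)^{-1/2} K (I_n + tB)^{-1/2}$. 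That bound is enough to deform each diagonal contour from the real line to the line with imaginary part $b_j$ (the vertical segments vanish since $p \geq 1$) inside an \emph{iterated} integral, with the off-diagonal integrations performed afterwards; absolute convergence of the full $K$-integral is never invoked. To salvage your argument you would need an estimate of this kind — uniform in the variables not being deformed — and an iterated-integral organization of $\int_{\mathcal H(n)} \Psi(K)\, dK$; as written, the step "this decay dominates the radial volume growth" is not correct.
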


\begin{proof}
If $A$ is Hermitian then we can simply apply the linear change of variables $K +A \mapsto K$
to obtain \eqref{Psi-shift}. Therefore we may restrict to the case $A = i B$ with $B$ Hermitian and $B +I_n$ positive definite.
We may also restrict to the case where $B$ is a diagonal matrix. To see this we write $\Psi_X$ 
to indicate that the definition~\eqref{def:Psi} depends on $X$.  Then for a unitary matrix $U$ one has 
\begin{equation} \label{PhiUX} 
	\Psi_X(U K U^*) = \Psi_{U^*X}(K), 
	\end{equation}
which follows from inserting $UKU^*$ into  the definition \eqref{def:Psi} and 
changing variables $S \mapsto  SU^*$. Recall that $f(SS^*)=f(USS^*U^*)$ for all $U\in\U(n)$ because $f$ only depends on the squared singular values of $S$.
The invariance~\eqref{PhiUX}  implies by the unitary invariance
of $dK$, see e.g.\ \cite{Dei}, that 
\begin{align} 
	\int_{\mathcal H(n)} \Psi_X(K + i B) dK  = \int_{\mathcal H(n)} \Psi_{U^*X} (K+i U^*BU) dK.
	\end{align}
and we may choose the unitary matrix $U$ so that $U^*BU$ is diagonal.	

Let $p=m-n-\nu>0$ and $A=iB = i\diag(b_1, \ldots, b_n)$ a diagonal matrix with $b_j > -1$,
for $j=1, \ldots, n$. Note that for $0 \leq t \leq 1$,
\begin{align} \nonumber 
	\left| \Psi(K + it B) \right| & \leq \left|\det (I_n + t  B- iK)\right|^{-p}
	 \int_{\mathcal H(n)}  \left|f(S S^*)\right| e^{\Tr(I_n + tB)(XX^* -S^*S)} dS \\
	& \leq C_0 \left|\det (I_n + t  B- iK)\right|^{-p}.
	\label{Phiestimate0}
	\end{align}
The prefactor is a finite constant $C_0 > 0$ depending on $B$ and $X$ but is independent of $t \in [0,1]$.
Since all $b_j > -1$, it is clear that $I_n + tB$ is a positive definite matrix. Hence
there exists a constant $C_1 > 0$, independent of $t$, such that $\left| \det(I_n+tB) \right| > C_1$ resulting in
\begin{equation} \label{detestimate} 
	\left|\det (I_n + t B- iK) \right| > C_1 \left|\det(I_n - i M_t) \right|,
	\end{equation}
where
\begin{equation} \label{defMt} 
	M_t := (I_n+tB)^{-\frac{1}{2}} K (I_n+tB)^{-\frac{1}{2}} 
	\end{equation}
is a Hermitian matrix. 
Let $\lambda_1(t), \ldots, \lambda_n(t)$ denote the eigenvalues of $M_t$. 
Then it is easy to check that, since the eigenvalues $\lambda_j(t)$ are real,
\begin{align} 
		\left|\det(I_n - i M_t) \right| & = \prod_{k=1}^n \left|1-i \lambda_k(t) \right|=\prod_{k=1}^n \sqrt{1+\lambda_k^2(t)}\nonumber\\
		&\geq \max_{k = 1, \ldots, n} \left|\lambda_k(t) \right| = \max_{x \in \mathbb{C}^n \setminus \{0\}} 
				\frac{\left|\langle M_t x, x \rangle \right|}{\langle x, x\rangle} 
			\label{detestimate2}
\end{align}
by the properties of Rayleigh quotients. Taking $x = x_j = (I_n+ tB)^{\frac{1}{2}}e_j$ in \eqref{detestimate2}
and noting that $\langle M_t x_j, x_j\rangle = K_{jj}$, see \eqref{defMt}, we obtain
\begin{equation} \label{detestimate3}
	\left|\det(I_n - i M_t) \right| \geq 
		\max_{j = 1, \ldots, n} \frac{|K_{jj}|}{\langle x_j, x_j \rangle}. 
		\end{equation}
The vectors $x_j$ depend on $t$, but their norms are uniformly bounded from below $\langle x_j, x_j \rangle\geq 1+t b_{\min}>C>0$ for $t\in [0,1]$ where $b_{\min}=\min_{j=1,\ldots,n}b_j$ and $C=1$ if $b_{\min}\geq0$ or $C=1-b_{\min}$ if $0\geq b_{\min}>-1$.		
	Then, combining \eqref{Phiestimate0}, \eqref{detestimate} and \eqref{detestimate3}, we obtain
\begin{equation} \label{Phiestimate} 
	\left| \Phi(K+tiB) \right| \leq  \frac{C_2}{\max_{j=1, \ldots, n} |K_{jj}|^p},
	\end{equation}
for some constant $C_2 > 0$, independent of $t$.

In the integral $\int_{\mathcal H(n)} \Psi(K) dK$ we first do the integration over the
diagonal elements $K_{jj}$ for $j=1, \ldots, n$. The integrand is analytic in each of the $K_{jj}$.
Because of the estimate \eqref{Phiestimate} with $p \geq 1$, the integral can be deformed from the real line to 
the horizontal line in the complex $K_{jj}$-plane with imaginary part equal to $b_j$. We do this for all diagonal entries
resulting in \eqref{Psi-shift}.
\hfill{$\Box$}
\end{proof}

Lemma \ref{lemma:shift}  can be applied to \eqref{intFU5} because 
$A = i(X^*X)^{-1}-iI_n$ satisfies $\Im A = (X^*X)^{-1} - I_n > -I_n$. 
Hence we have
\begin{multline}
\mathbb E \left[f(YY^*) \right] \propto \prod_{j=1}^n x_j^{-m+n} \int_{\mathcal H(n)} dK \det( I_n-iK)^{-(m-n-\nu)}
  \\
	\times\int_{\mathcal M(n+\nu,n)} dS f(SS^*) 	e^{\Tr(I_n-i K)  (X^*X-S^*S)}. \label{intFU6}
\end{multline}

\paragraph{Step 4: Singular value decomposition of $S$}
We take the singular value decomposition $S = V_1 \Sigma V_2$ where $V_1\in\U(n+\nu)/[\U^n(1)\times\U(\nu)]$, $V_2\in\U(n)$,
and $\Sigma$ is a diagonal matrix with the singular values of $S$.
The Jacobian of this transformation is proportional to $\Delta(y)^2 \prod_{j=1}^n y_j^{\nu}$,
see \cite{EdRa}, where $y_1, \ldots, y_n$ are the squared singular values. Recall that $f$ is defined as a symmetric
function on the eigenvalues.
Then \eqref{def:Psi} reads
\begin{multline} \label{Psi-svd}
\Psi(K) 	\propto \frac{e^{\Tr(I_n-iK)(X^*X)}}{\det(I_n-iK)^{m-n-\nu}} \int_{[0,\infty)^n} f(y_1, \ldots, y_n) \prod_{j=1}^n y_j^{\nu}
		\Delta(y)^2 dy_1 \ldots dy_n \\
		\times \int_{\mathcal U(n)} e^{-\Tr(I_n-iK) V_2^* \Sigma^2 V_2} dV_2
		\end{multline}
The integral over $V_2 \in \mathcal U(n)$ is a HCIZ integral \eqref{HCIZ-formula}. 
Let $\lambda_1, \ldots, \lambda_n$ be the eigenvalues of $K$, then the HCIZ integral yields a term
proportional to $\det [e^{-(1-i \lambda_j)y_k} ]$ $/[\Delta(\lambda) \Delta(y)]$.
We end up with
\begin{multline} \label{Psi-svd2}
\Psi(K) \propto \frac{e^{\Tr(I_n-iK)(X^*X)}}{\det(I_n-iK)^{m-n-\nu} \Delta(\lambda)} \\
	\times \int_{[0,\infty)^n} f(y_1, \ldots, y_n) \prod_{j=1}^n y_j^{\nu}
		\Delta(y) \det\left[e^{-(1-i \lambda_j)y_k} \right]_{j,k=1}^n dy_1 \ldots dy_n.
		\end{multline}
		
\paragraph{Step 5: Eigenvalue decomposition of $K$}

Next, we decompose $K = V_0 \Lambda V_0^*$ in a unitary matrix  $V_0\in\U(n)/\U^{n}(1)$ 
and its eigenvalues $\Lambda = \diag(\lambda_1, \ldots, \lambda_n)$. The Jacobian is proportional to $\Delta(\lambda)^2$,
see e.g. \cite{Dei},
and by \eqref{intFU6} and \eqref{Psi-svd2} we have
\begin{multline} \label{intFU8}
	\mathbb E \left[f(YY^*) \right]\propto 
	\prod_{j=1}^n x_j^{-m+n}
	\int_{\mathbb R^n} \prod_{j=1}^n (1- i \lambda_j)^{-m+n+\nu}  \Delta(\lambda) d \lambda_1 \cdots d \lambda_n
	  \\	\times
	\int_{[0,\infty)^n} f(y_1, \ldots, y_n)  \prod_{j=1}^n y_j^{\nu} \Delta(y) 
		\det\left[e^{-(1-i \lambda_j)y_k} \right]_{j,k=1}^n  d y_1 \cdots dy_n \\
 \times	\int_{\mathcal U(n)} e^{\Tr(I_n-iV_0 \Lambda V_0^*)(X^*X)}dV_0.
	\end{multline}
The $V_0$ integral is again a HCIZ integral \eqref{HCIZ-formula},
and it gives a contribution proportional to $\det[ e^{(1-i \lambda_j) x_k} ]/[\Delta(\lambda) \Delta(x)]$.
Plugging this term into \eqref{intFU8} and noting that we may change
the order of integration at this stage, we find
\begin{align} \label{intFU9} 
	\mathbb E \left[f(YY^*) \right]= \int_{[0,\infty)^n} f(y_1, \ldots, y_n) p_{X,Y}(x,y) dy_1 \cdots dy_n
	\end{align}
where
\begin{multline} \label{Py1yn} 
	p_{X,Y}(x,y)\propto	\left(\prod_{j=1}^n x_j^{-m+n} \right)  \left( \prod_{j=1}^n y_j^{\nu} \right) 
		\frac{\Delta(y)}{\Delta(x)} \times \\
	\int_{\mathbb R^n}  \prod_{j=1}^n (1- i \lambda_j)^{-(m-n-\nu)} 
	\det\left[ e^{(1-i \lambda_j) x_k}\right]_{j,k=1}^n  \det\left[ e^{-(1-i \lambda_j) y_k} \right]_{j,k=1}^n  d \lambda_1 \cdots d \lambda_n.
	\end{multline}

\paragraph{Step 6: Andr\'eief formula}
We calculate the integral over $\lambda_1, \ldots, \lambda_n$ in \eqref{Py1yn} via the Andr\'eief formula \eqref{Andreief}.
This gives the determinant
\begin{align} \label{detintegrals}
	n! 
	 \det \left[ \int_{-\infty}^{\infty} 
		\frac{e^{(x_k-y_j) (1-i\lambda)}}{(1-i \lambda)^{m-n-\nu}} d \lambda \right]_{j,k=1}^n
		\end{align}
with integrals of the form~\eqref{Ingham-Siegel-nis1}.
The result is
\begin{equation}
\int_{-\infty}^{\infty} \frac{e^{(x_k-y_j) (1-i\lambda)}}{(1-i \lambda)^{m-n-\nu}} d \lambda
		=  \frac{2\pi}{(m-n-\nu-1)!} \, (x_k - y_j)_+^{m-n-\nu-1}.
\end{equation}
We use this in \eqref{intFU9} and \eqref{Py1yn} and obtain  \eqref{toprove1}. This concludes concludes the proof of the Theorem \ref{propTruncation}.
 \hfill{$\Box$}

\section{Proofs of Proposition \ref{PkQkintegrals} and Theorem \ref{Knlimit}} \label{sec:proof3}

We first prove Proposition \ref{PkQkintegrals} and Theorem \ref{Knlimit} afterwards.

\begin{proof}[Proof of Proposition \ref{PkQkintegrals}] \quad
Recall that $P_k$ and $Q_k$ are defined by \eqref{defPk} and \eqref{defQk}, respectively, for $k=0, \ldots, n-1$. 
Then $Q_k$ is in the linear span of $w_1^{(r)}, \ldots, w_n^{(r)}$ since
it is of the form \eqref{wkr-span} with $q(s) = (s-k)_k$ a polynomial of degree $k \leq n-1$.
Also note that the integrand in \eqref{defPk} has poles at $t=0, \ldots, k$, and no other poles inside $\Sigma_k$.
Thus if we evaluate \eqref{defPk} by means of the residue theorem and find contributions 
from $t=0, \ldots, k$, only. Hence the function $P_k$ is a polynomial of degree $k$.
It remains to verify the biorthogonality \eqref{PjQk-biorthogonal}.

Let $l,k = 0, 1, \ldots, n-1$. 
From \eqref{defPk} we have
\begin{multline} \int_0^1 P_l(x) Q_k(x) dx  \\
	 = \frac{1}{2\pi i} \oint_{\Sigma_l} \frac{1}{(t-l)_{l+1}} 
		\prod_{j=1}^r \frac{\Gamma(t+1+m_j-n)}{\Gamma(t+1+\nu_j)} 
		\int_0^1 x^t Q_k(x) dx dt. \label{PjQk-biorthogonal1}
		\end{multline}
The moments of $Q_k$ are given by the general identity for the moments of the Meijer G-function,
\begin{equation} 
\int_0^{1} x^{s-1} Q_k(x) dx =  (s-k)_k \prod_{j=1}^r \frac{\Gamma(s+\nu_j)}{\Gamma(s+m_j-n)}.
	\end{equation}
This identity can be plugged into \eqref{PjQk-biorthogonal1} which cancels a lot of $\Gamma$-factors,
\begin{equation} \label{PjQk-biorthogonal2}
 \int_0^1 P_l(x) Q_k(x) dx  
	=  \frac{1}{2\pi i} \oint_{\Sigma_l} \frac{(t+1-k)_k}{(t-l)_{l+1}} 	dt=  \frac{1}{2\pi i} \oint_{\Sigma_l} \frac{\Gamma[t-l]}{\Gamma[t+1-k]} 	dt.
	\end{equation}
For $l < k$, the integrand in \eqref{PjQk-biorthogonal2} is a polynomial	
and the integral is zero by Cauchy's theorem.
For $l \geq k$, there are poles at $t=k, \ldots, l$ which are all inside the contour $\Sigma_l$.
The integrand is a rational function that behaves like $O(t^{k-l-1})$ as $|t| \to \infty$. Therefore
by simple residue calculation at infinity, the integral also vanishes if $l > k$ and it is equal to $1$ if $l = k$. 
Thus \eqref{PjQk-biorthogonal} is satisfied.

Inserting \eqref{defPk} and \eqref{defQk} into \eqref{Knsum}, and noting that we can take
the same contour $\Sigma_n$ for every $k$ in \eqref{defPk}, we obtain  a double integral for $K_n$,
\begin{multline} 
	K_n(x,y) 
	= 
\frac{1}{(2\pi i)^2}
		\int_{C} ds \oint_{\Sigma_n} dt \prod_{j=1}^r \frac{\Gamma(s+\nu_j) \Gamma(t+1+m_j-n)}{\Gamma(s+m_j-n) \Gamma(t+1+\nu_j)}   
			\sum_{k=0}^{n-1} \frac{(s-k)_k}{(t-k)_{k+1}}
		x^t 		y^{-s} \\
=\frac{1}{(2\pi i)^2}
		\int_{C} ds \oint_{\Sigma_n} dt \prod_{j=1}^r \frac{\Gamma(s+1+\nu_j) \Gamma(t+1+m_j-n)}{\Gamma(s+1+m_j-n) \Gamma(t+1+\nu_j)}   
			\sum_{k=0}^{n-1} \frac{(s+1-k)_k}{(t-k)_{k+1}}
		x^t 		y^{-s-1}, \label{a}
		\end{multline}
where we made the change of variables $s \mapsto s+1$.
The summation can be simplified, because of the telescoping sum
\begin{equation}\label{b}
\sum_{k=0}^{n-1} \frac{(s+1-k)_k}{(t-k)_{k+1}}
	= \frac{1}{s-t} \left(\frac{\Gamma(s+1)}{\Gamma(t+1)} \frac{\Gamma(t+1-n)}{\Gamma(s+1-n)} - 1 \right),
\end{equation}
which can be readily checked by complete induction.
The contours $\Sigma_n$ and $C$ do not intersect. Therefore the term $t=s$ is not a pole inside the contour $\Sigma_n$. The double integral~\eqref{a} splits into two terms due to \eqref{b}. For the second term the contour integral over $t$ does not encircle any pole because of $m_j-n>\nu_j$ and $m_j-n,\nu_j\in\mathbb{N}_0$. Thus this term vanishes. The remaining term is the one shown in \eqref{Knintegral}.

The expression \eqref{Knintegral} in terms of Meijer G-functions is obtained by noticing that
\begin{equation} \label{stInt}
\frac{1}{s-t} = -\int_0^1 u^{t-s-1} du.
\end{equation}
Interchanging the $s$ and $t$ integral with the $u$ integral and using the  definition of the Meijer G-functions~\eqref{MeijerG-def} the identity follows, see also the proof of Theorem 5.3 in \cite{KuZh}. This concludes the proof.
\begin{comment}
The expression in terms of Meijer G-functions is obtained by the reflection formula of the Gamma function, $\sin\pi z =\pi/[\Gamma(1-z)\Gamma(z)]$. The factor $1/(s-t)$ can be expressed as an integral, $1/(s-t)=-\int_0^1 u^{t-s-1} du$. The interchange of the $t$ and $s$ integral with the $u$ integral is allowed since everything is absolutely integrable such that we employ the definition of the Meijer G-functions~\eqref{MeijerG-def},
see also the proof of Theorem 5.3 in \cite{KuZh}. This concludes the proof.
\end{comment}
\hfill{$\Box$}
\end{proof}

\begin{proof}[Proof of Theorem \ref{Knlimit}] 

We employ the following asymptotic behavior of a ratio of Gamma functions
\begin{equation} \label{Gammaratiolimit} 
\frac{\Gamma(t+1 + m_j-n)}{\Gamma(s+1+m_j-n)} = 
	\begin{cases} \displaystyle \frac{\sin \pi s}{\sin \pi t} \, n^{t-s} \left(1 + O\left(\frac{1}{n}\right)\right), 
		& \text{ for } j = 0, \\ \displaystyle
		(m_j-n)^{t-s} \left(1 + O\left(\frac{1}{m_j-n}\right)\right), & 
		\text{ for } j \in \{1, \ldots, r \} \setminus J
		\end{cases} \end{equation}
as $n \to \infty$. This follows as in the proof of Theorem 5.3 in \cite{KuZh},
since $m_0 = 0$ and $m_j - n \to \infty$ as $n \to \infty$ for $j \in \{1, \ldots, r \} \setminus J$.

In the double  integral formula in \eqref{Knintegral}  we deform the contour $\Sigma_n$ to $\Sigma$, and obtain
\begin{multline} \label{Knscaled}
\frac{1}{c_n} K_n \left( \frac{x}{c_n}, \frac{y}{c_n} \right) \\
	= \frac{c_n^{s-t}}{(2\pi i)^2}
		\int_{C} ds \oint_{\Sigma} dt \prod_{j=0}^{r} 
		\frac{\Gamma(s+1+\nu_j)}{\Gamma(t+1+\nu_j)}
			\prod_{j=0}^{r}  \frac{\Gamma(t+1+m_j-n)}{\Gamma(s+1+m_j-n)} 
		\frac{x^t y^{-s-1}}{s-t}.
		\end{multline}
Because of definition \eqref{defcn} we have
\begin{multline} 
	c_n^{s-t} \prod_{j=0}^{r} \frac{\Gamma(t+1+m_j-n)}{\Gamma(s+1+m_j-n)} 
		= \left( n^{s-t} \frac{\Gamma(t+1+m_0-n)}{\Gamma(s+1+m_0-n)}\right) \\
	\times			\prod_{j \not\in J} \left( (m_j-n)^{s-t} \frac{\Gamma(t+1+m_j-n)}{\Gamma(s+1+m_j-n)} \right)
				\prod_{j \in J} 
				\frac{\Gamma(t+1+m_j-n)}{\Gamma(s+1+m_j-n)}.
\end{multline}
Each of the factors in the product has a finite limit as $n \to \infty$, cf. \eqref{Gammaratiolimit},
and the full product tends to  
\begin{equation}
 \frac{\sin \pi s}{\sin \pi t} \ \prod_{k=1}^q \frac{\Gamma(t+1+\mu_k)}{\Gamma(s+1+\mu_k)}
 \end{equation}
since $m_j-n = \mu_k$ if $j = j_k \in J$.
It is then allowed to take the limit inside the double integral in \eqref{Knscaled}
and deform the contour $C$ to the vertical line $\Re s = -1/2$ by analyticity
and the decay of the integrand at infinity in the $s$-variable. This leads to \eqref{scalinglimit}.

The expression in terms of Meijer G-functions is obtained by using the reflection formula of the Gamma function, $\sin\pi z =\pi/[\Gamma(1-z)\Gamma(z)]$ together with \eqref{stInt}. The identity now follows along the same lines as in the proof of Proposition \ref{PkQkintegrals}.
\begin{comment}
The expression in terms of Meijer G-functions is obtained by the reflection formula of the Gamma function, $\sin\pi z =\pi/[\Gamma(1-z)\Gamma(z)]$. The factor $1/(s-t)$ can be expressed as an integral, $1/(s-t)=-\int_0^1 u^{t-s-1} du$. The interchange of the $t$ and $s$ integral with the $u$ integral is allowed since everything is absolutely integrable such that we employ the definition of the Meijer G-functions~\eqref{MeijerG-def},
see also the proof of Theorem 5.3 in \cite{KuZh}. This concludes the proof.
\end{comment}
\hfill{$\Box$}
\end{proof}

\section{Conclusions and Outlook}\label{conclusio}

We analyzed the singular value statistics of a matrix $X$ (fixed as well as randomly chosen)  multiplied by a truncated unitary matrix $T$ which is distributed by the induced Haar measure. Though we only considered a multiplication from the left side $TX$ one can easily generalize the results to a multiplication from both sides $T_{\rm L}XT_{\rm R}$ where $T_{\rm L}$ and $T_{\rm R}$ are truncations of two independent unitary matrices. The reason for such a simple generalization is the determinantal point process fulfilled by the joint probability density of the singular values. We proved that the  joint probability density of the squared singular values of $TX$ satisfies a polynomial ensemble if the joint probability density of the squared singular values of $X$ does. In particular with the help of our results one can calculate the squared singular value statistics of any product $T_{\rm L,1}\cdots T_{\rm L,r_{\rm L}}XT_{\rm R,1}\cdots T_{\rm R,r_{\rm R}}$ with $r_{\rm L},r_{\rm R}\in\mathbb{N}_0$ and $T_{\rm L,j}$ and  $T_{\rm R,j}$ truncations of independent unitary matrices and $X$ either fixed or another random matrix. Indeed one can also mix products of truncated unitary matrices with Ginibre matrices. The combination of Theorem~\ref{Ginibreproduct} and Corollary~\ref{TruncationTransformation}  yields a polynomial ensemble for the squared singular values of such a mixed product. We expect that also for such a mixed product the statistics are governed by Meijer G-functions since this particular kind of functions is closed under Mellin convolution as it was shown here for a pure product of truncated unitary matrices and studied in \cite{AkIpKi,AkKiWe} for a product of Ginibre matrices.

Our study shows that the determinantal point process also applies to a product with truncated unitary matrices. In particular one needs a group integral replacing the Harish~Chandra/ Itzykson-Zuber integral~\cite{Harish,ItZu} for which it is not immediate that the result can be written in terms of determinants, cf. Theorem~\ref{GRunitaryintegral}. This particular result is even more astounding when noticing that one does not effectively integrate over the whole unitary group but only over a subset. The reason for this is a Heaviside step function in the integrand. Harish Chandra made contact between group integrals and representation theory \cite{Harish}. It would be interesting if also something like this exists for the integral considered by us and can be explained by group theory.

Additionally we looked at the spectral statistics of a product of truncated unitary matrices in detail. For this product we calculated the kernel of the corresponding polynomial ensemble of the squared singular values at finite matrix size and in the hard edge limit at the origin. In a forthcoming publication we also derive the bulk, the soft and the hard edge statistics as it was very recently done for the Ginibre ensemble in \cite{LiWaZh}. The latter two statistics may appear at the lower and the upper bound of the support because the squared singular values of a product of truncated unitary matrices always live on the interval $[0,1]$. If the support touches either the origin or the upper bound $1$ one would expect hard edge statistics at these points.

Another generalization of our results refers to the restriction that the first matrix in the product of truncated unitary matrices has to satisfy $m_1>2n+\nu_1$, see Corollary~\ref{TruncationTransformation}. This matrix has not generally to be  the first matrix $T_1$. With the help of the discussion above it can be any matrix in the product. Nevertheless we have to assume that at least one truncated unitary matrix multiplied has to satisfy the condition to prove Corollary~\ref{TruncationTransformation} in the way we have done. An interesting question would be: What happens if this restriction is not met? From the spectral statistics of one truncated unitary matrix, see e.g. \cite{ZySo,For-book}, we know that some singular values are exactly located at $1$. Numerical simulations performed by us hint that this seems to be true also for a product of truncated unitary matrices. In the case that this is indeed true, the question arises about the algebraic structure. Does the determinantal point process carry over to a product $T_r\cdots T_1$ where the restriction $m_j>2n_j+\nu_j$ is not met for all $j=1,\ldots, r$?

\section*{Acknowledgements}

MK acknowledges partial financial support by the Alexander von Humboldt foundation. 
AK and DS are supported by KU Leuven Research Grant OT/12/073 and the
Belgian Interuniversity Attraction Pole P07/18. AK is also supported
by FWO Flanders projects G.0641.11 and G.0934.13.
%, and by Grant No. MTM2011-
%28952-C02 of the Spanish Ministry of Science and Innovation

%%%%%%%%% BIBLIOGRAPHY %%%%%%%%%%%%
%%%%%%%%%%%%%%%%%%%%%%%%%%%%%%%

%\newpage

%\bibliographystyle{plain}
%\bibliography{Bibliography}

\end{document}